\subjclass[2020]{Primary 60J10, 65C05, 68W20}
\newtheorem{thm}{Theorem}[section]
\newtheorem{prop}[thm]{Proposition}
\theoremstyle{definition}
\theoremstyle{remark}
\title[Cutoff for Cyclic Dynamics]{Cutoff Phenomenon for Cyclic Dynamics on Hypercube}\thanks{The revised version of this draft is published in the Markov Processes and Related Fields journal.}
\author{Keunwoo Lim}
\address{Department of Mathematical Sciences\\ Seoul National University}
\email{kwlim.snu@gmail.com}
\keywords{Irreversible Markov chain, mixing time, cutoff phenomenon}
\begin{document}
\maketitle
\begin{abstract}
The cutoff phenomena for Markovian dynamics have been observed and
rigorously verified for a multitude of models, particularly for Glauber-type
dynamics on spin systems. However, prior studies have barely considered irreversible chains. In this work, the cutoff phenomenon of
certain cyclic dynamics are studied on the hypercube $\Sigma_{n} =
Q^{V_{n}}$, where $Q = \{1, 2, 3\}$ and $V_{n} = \{1,...,n\}$. The main
feature of these dynamics is the fact that they are represented by an
irreversible Markov chain. Based on the couplings modified from the previous study of the cutoff phenomenon for the Curie-Weiss-Potts
model, a comprehensive proof is presented.
\end{abstract}
\section{Introduction}\label{1}
This work considers the mixing behavior of irreversible dynamics
on the hypercube $\Sigma_{n} = Q^{V_{n}}$, where $Q = \{1, 2, 3\}$ and
$V_{n} = \{1,...,n\}$. 
We consider the hypercube as the structure that assigns the color in $Q$ on each vertex in $V_{n}$. One of the widely known Markov chains is the discrete time
Glauber dynamics for the uniform measure on $\Sigma_{n}$. At each time
step, the vertex $v\in V_{n}$ is uniformly chosen. Then, we reassign the color of vertex $v$ uniformly on $Q$.
The mixing of these
dynamics is fully understood, and the sharp convergence exhibited is
defined as the cutoff phenomenon. \par
In this study, the result is extended to
the discrete time cyclic dynamics $(\sigma^{n}_{t})_{t=0}^{\infty}$ iterated by the following rule. At time $t+1$, the vertex $v\in V_{n}$ is uniformly chosen. 
Then, $\sigma^{n}_{t+1}$ is set as
\begin{equation*}
\sigma^{n}_{t+1}(w) =
\begin{cases}
\sigma^{n}_{t}(w)\text{ w.p. }1&\text{ if }w\neq v\\
\sigma^{n}_{t}(w)\text{ w.p. }1-p\,\,\,\text{and}\,\,\,\sigma^{n}_{t}(w)+1\text{ w.p. }p
&\text{ if }w=v,
\end{cases}
\end{equation*}
where $0<p<1$. Here, $\sigma^{n}_{t}(w)$ is denoted as the color of vertex $w$ on $\sigma^{n}_{t}$ and w.p. is an abbreviation of ``with probability." The color of each vertex is evaluated based on modular arithmetic modulo 3. The cutoff phenomenon described below is
proved. \par
The descriptions of the cutoff phenomenon are based on \cite{2}. Let the
total variance distance between the two probability distributions $\mu$
and $\nu$ on discrete state space $\mathcal{X}$ be defined as
\begin{equation*}
\Vert\mu-\nu\Vert_{\textnormal{TV}} =
\max_{A\subseteq\mathcal{X}}\,\vert\,\mu(A)-\nu(A)\,\vert.
\end{equation*}
Then, consider the Markov chain $(X_{t})$ on state space $\mathcal{X}$ with the
transition matrix $P$ and stationary distribution $\pi$. The maximal
distance $d(t)$ of the Markov chain $(X_{t})$ is defined as
\begin{equation*}
d(t)=\max_{x\in\mathcal{X}}\,\Vert P^{t}(x,\cdot)-
\pi\Vert_{\textnormal{TV}},
\end{equation*}
while the $\epsilon$-mixing time is defined as
\begin{equation*}
t_{\textnormal{mix}}(\epsilon)=\min\{t\,:\,d(t)\leq\epsilon\}.
\end{equation*}
The mixing time $t_{\textnormal{mix}}$ is denoted as
$t_{\textnormal{mix}}(\frac{1}{4})$ by convention.\par
For all $\epsilon\in(0,1)$, suppose that the sequence of Markov chains $
\{(X_{t}^{n})\}=(X^{1}_{t}),\,(X^{2}_{t}),\,\dots$
satisfies
\begin{equation*}
\lim_{n\to\infty}\frac{t^{(n)}_{\textnormal{mix}}(\epsilon)}{t^{(n)}_{\textnormal{mix}}(1-\epsilon)} = 1,
\end{equation*}
where $t^{(n)}_{\text{mix}}(\epsilon)$ is the $\epsilon$-mixing time of
the chain $(X^{n}_{t})$. Denote the mixing time of the $n$-th chain as $t_{\textnormal{mix}}^{(n)} =
t_{\textnormal{mix}}^{(n)}(\frac{1}{4})$, and the maximal distance as $d^{(n)}(t)$. Then, these Markov chains shows a sharp
decrease in the total variance distance from $1$ to $0$ close to the
mixing time. It is said that this sequence exhibits the cutoff
phenomenon. Further, it is said to have a window of size $O(w_{n})$ if $\lim_{n\to\infty}\big(w_{n}/{t_{\textnormal{mix}}^{(n)}}\big)=0$,
\begin{gather*}
\lim_{\alpha\to-
\infty}\liminf_{n\to\infty}d^{(n)}\big(t_{\textnormal{mix}}^{(n)}+\alpha
w_{n}\big) = 1\quad\text{and}\quad
\lim_{\alpha\to\infty}\limsup_{n\to\infty}d^{(n)}\big(t_{\textnormal{mix}}^{
(n)}+\alpha w_{n}\big) = 0.
\end{gather*}
The cutoff phenomenon was first observed in card shuffling, as
demonstrated in \cite{3}. Since then, the cutoff phenomena for Markovian dynamics have been observed and rigorously verified with a
multitude of models. In recent times, there have been several
breakthroughs in the verification of cutoff phenomena for Glauber-type
dynamics on spin systems. For example, the cutoff phenomenon for the
Glauber dynamics on the Curie-Weiss model, which corresponds to the mean-field
Ising model, is proven in \cite{4} for the high temperature regime.
This work has been further generalized to \cite{1}, where Glauber dynamics
for Curie-Weiss-Potts model have been considered. These two outcomes were
considered on the mean-field model defined on the complete graph, where
geometry is irrelevant.\par
On the other hand, the cutoff phenomenon for the spin system on the
lattices were more complicated. The first development was achieved for the
Ising model on the lattice in \cite{5}, and in \cite{6}, it was extended
to a general spin system in the high temperature regime. In \cite{7}, a
novel method called ``information percolation'' was developed, and the
cutoff for the Ising model with a precise window size was obtained. This
information percolation method has also been successfully applied to
Swendsen-Wang dynamics for the Potts model and to Glauber dynamics for the
random-cluster model in \cite{8} and \cite{9}, respectively.\par
In the present work, the uniform measure, which corresponds to infinite
temperature spin systems, is considered. From this perspective, the
proposed model is simpler than existing models in which finite temperature
has been considered. However, this model has a critical difference in that
the dynamics being considered are irreversible. We emphasize here that the cutoff phenomenon for the irreversible chains are known only for few models, e.g., non-backtracking random walks on sparse random graphs \cite{10}.
\subsection{Main Result}
Theorem \ref{1.1} presents the cutoff phenomenon of the cyclic dynamics
considered herein and the main result of the current article.
\begin{thm}\label{1.1}
The cyclic dynamics defined on $\Sigma_{n}$ with probability $0<p<1$
exhibit cutoff at mixing time
\begin{equation*}
t(n) =\frac{1}{3p}n\log n
\end{equation*}
with a window of size $O(n)$. 
\end{thm}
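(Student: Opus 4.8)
The plan is to exploit the fact that at infinite temperature the coordinates evolve without interaction, and each single site performs the circulant (cyclic) walk on $\mathbb{Z}/3\mathbb{Z}$ whose one-step matrix has uniform stationary distribution and nontrivial eigenvalue $\theta = 1-p+p\omega$, where $\omega = e^{2\pi i/3}$ and $|\theta|^2 = 1-3p+3p^2$. Since circulant matrices are normal and the single-site operators on distinct coordinates commute, the full transition operator $P$ is normal on $L^2(\pi)$ and is simultaneously diagonalized by the characters $f_J(\sigma)=\prod_v\omega^{j_v\sigma_v}$. I would therefore establish two matching bounds: a lower bound via a distinguishing statistic (Wilson's method) built from the slowest character, and an upper bound via the $L^2$ (spectral) contraction, both governed by the same single-site spectrum. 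This is what pins the constant to $\tfrac1{3p}$.

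For the lower bound, take $\Phi(\sigma)=\sum_{v}\omega^{\sigma_v}$. A one-step computation gives $P\Phi = \mu\Phi$ with $\mu = 1-\tfrac1n(1-\theta) = 1-\tfrac{p}{n}(1-\omega)$, so that $|\mu|^{2} = 1-\tfrac{3p}{n}+O(n^{-2})$ and $|\mu|^{t}\approx e^{-3pt/(2n)}$. Starting from a constant configuration $x$ one has $\mathbb{E}_x[\Phi(\sigma_t)] = n\mu^{t}$, while $\mathbb{E}_\pi[\Phi]=0$ and $\operatorname{Var}_\pi\Phi = n$. The second moment $\mathbb{E}_x|\Phi(\sigma_t)|^2$ is again read off from the eigenvalue $\lambda_{1,1}=1-\tfrac{3p}{n}$ attached to $\omega^{\sigma_u}\overline{\omega^{\sigma_v}}$, and one finds $\operatorname{Var}_x\Phi(\sigma_t)$ stays of order $n$ throughout the relevant range. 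For $t=\tfrac1{3p}n\log n-\alpha n$ the mean separation is $|n\mu^t|\approx\sqrt n\,e^{3p\alpha/2}$, which dwarfs the $O(\sqrt n)$ standard deviations; a Chebyshev/Wilson argument applied to a real component of $\Phi$ then forces $d^{(n)}(t)\to1$ as $\alpha\to\infty$, giving the lower bound with window $O(n)$.

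For the upper bound I would use normality: in the orthonormal character basis $4\,d^{(n)}(t)^2\le \sum_{J\neq 0}|\lambda_J|^{2t}$, where $\lambda_{m,m'}=1-\tfrac1n\big(m(1-\theta)+m'(1-\bar\theta)\big)$ and $(m,m')$ records how many coordinates carry each nontrivial character. Since $|f_J(x)|=1$, the estimate is uniform in the starting point. Organizing the sum by $k=m+m'$, the $k=1$ layer contributes $2n|\mu|^{2t}\approx 2n\,e^{-3pt/n}$, and the identities $\operatorname{Re}(1-\theta)=\operatorname{Re}(1-\bar\theta)=\tfrac{3p}{2}$ give $|\lambda_{m,m'}|^{2t}\lesssim e^{-3pkt/n}$, so the whole series is dominated by $(1+2e^{-3pt/n})^n-1$. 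At $t=\tfrac1{3p}n\log n+\alpha n$ this is $\approx 2e^{-3p\alpha}\to0$, so $d^{(n)}(t)\to0$, again with window $O(n)$.

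The main obstacle is matching the constant $\tfrac1{3p}$ on the upper side. A direct coalescence coupling, selecting the same site in both chains and coupling the advance coins, cannot do better than $\tfrac2{3p}n\log n$: any such coupling must coalesce every coordinate, so its failure probability is at least $n$ times a single site's total variation distance, which forces each site to be updated about $\tfrac{2}{3p}\log n$ times. Recovering the true factor therefore requires the $L^2$ route (or an equally delicate correlated coupling), and the technical heart is (i) verifying the normality that legitimizes the orthonormal spectral expansion and (ii) controlling the full character sum, in particular absorbing the imaginary parts of the $\lambda_{m,m'}$ into the decay estimate so that the tail stays summable. I expect step (ii), rather than the dominant $k=1$ term, to be where the care is needed.
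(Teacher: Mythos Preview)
Your proposal is correct and takes a genuinely different route from the paper. The paper proves the upper bound by a chain of couplings in the style of Cuff--Ding--Louidor--Lubetzky--Peres--Sly: first contracting and coalescing the proportion chains $(S_t,\tilde S_t)$ via semi-synchronized and semi-coordinatewise couplings, then coalescing the finer basket chains $(\mathbf S_t,\tilde{\mathbf S}_t)$ via a basketwise coupling. The lower bound is obtained from the contraction of $\mathbb E\|\hat S_t\|_2^2$ together with a variance bound for $S_t$. The paper in fact explicitly remarks that for this irreversible chain ``we are not able to use spectral analysis and the proof becomes more complex.''

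Your key observation is that this remark is too pessimistic: although $P$ is not reversible, the single-site kernel is circulant and hence normal, and since $P=\frac1n\sum_v P_v$ with commuting normal summands, $P$ itself is normal on $L^2(\pi)$ and is diagonalized by the characters $f_J$. The standard $L^2$ upper bound then goes through verbatim with $|\lambda_J|^{2t}$ in place of $\lambda_J^{2t}$, and because $|f_J(x)|\equiv 1$ the bound is automatically uniform in the starting state. Your Wilson lower bound with $\Phi=\sum_v\omega^{\sigma_v}$ is the natural spectral counterpart. This is substantially shorter than the paper's argument and bypasses all the coupling machinery; the trade-off is that it relies on the exact product/character structure and would not survive the addition of interactions, whereas the paper's coupling scheme is modeled on (and was designed to generalize to) the interacting Curie--Weiss--Potts setting.

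One point deserves care: your claimed bound $|\lambda_{m,m'}|^{2t}\lesssim e^{-3pkt/n}$ is not literally true uniformly in $k=m+m'$. Writing $\lambda_{m,m'}=1-\tfrac{3pk}{2n}-i\tfrac{p\sqrt3(m'-m)}{2n}$ gives $|\lambda_{m,m'}|^2=(1-\tfrac{3pk}{2n})^2+\tfrac{3p^2(m'-m)^2}{4n^2}$, and for $k$ of order $n$ the imaginary contribution can push $|\lambda|^2$ above $e^{-3pk/n}$. The fix is standard: split the sum at $k\le\epsilon n$ (where $|\lambda|^2\le 1-\tfrac{3pk}{n}(1-O(\epsilon))$ suffices and the $k=1$ term dominates) and $k>\epsilon n$ (where the uniform spectral gap $|\lambda|^2\le 1-c(\epsilon,p)<1$ kills the at most $3^n$ terms against $t=\Theta(n\log n)$). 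You already flag exactly this as the place where the work lies, so this is a refinement rather than a gap.
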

As the theorem can be similarly proved for all $0 < p < 1$, the proof is
presented for $p=\frac{1}{2}$. In Section \ref{2}, the notations are set
and the contractions of the proportion chain are provided. The proof of
the lower bound of the cutoff is then presented. Section \ref{3} analyzes the coalescence of the proportion and basket chains.
Following this, the upper bound of the cutoff is proved.\par
The dynamics considered in this article is a Glauber-type (but asymmetric) dynamics on Curie-Weiss-Potts model with three spins at infinite temperature. The cutoff for usual symmetric Glauber dynamics on Curie-Weiss-Potts model has been verified for all the high temperature regime in \cite{1}. For the asymmetric dynamics, the metastability for all the low temperature regime has been thoroughly analyzed in \cite{11} for the three spin case. It is widely believed that the asymmetric dynamics also exhibits cutoff phenomenon at all the entire high temperature regime, but the proof is missing at this moment; the current article investigated the special case of the last problem.\par
The structure of the proof is similar to the case of the cutoff phenomenon of the Glauber dynamics for the Curie-Weiss-Potts model in high temperature regime presented in \cite{1}. The convergence to the stationary distribution is obtained by successively coalescing the proportion chains and the basket chains with the coupling methods. The major difference with the previous method is the construction of the appropriate couplings to deal with the asymmetric nature of the irreversible dynamics. They are based on the couplings introduced in \cite{1}, but more sophisticated constructions are needed in cases where symmetry is starting to break. \par
The proof cannot be generalized to the cases where the number of colors are larger than three. One of the obstructions is the proof of the Proposition \ref{2.2}, which presents the convergence of the proportion chain to stationary distribution in $\ell^{2}$-norm. The computation is simplified only when the number of colors are three. \par

We remark that the Glauber dynamics for the uniform measure that corresponds to the current model exhibits the cutoff phenomenon. It is proven in \cite{2} that this reversible dynamics exhibits the cutoff at $\frac{1}{2}n\log n$ with a window of size $O(n)$. 
For that reversible case, the spectral analysis can be applied to obtain the upper bound (see \cite[Chapter 12]{2}). In particular, a direct relationship between the eigenvalues of the transition matrix and the bound of the total variation distance is crucially used. For our irreversible case, we are not able to use spectral analysis and the proof becomes more complex.\par
Note that when $p>\frac{2}{3}$, the mixing time of the cyclic dynamics considered in this article is smaller than the mixing time of the Glauber dynamics defined above. It shows that the irreversible chain can converge faster into uniform stationary distribution than reversible chain. This study is the part of an attempt to provide the theoretical background in applying the irreversible Markov chains to Markov chain Monte Carlo methods which is believed to be faster than the reversible one.
\par
\section{Lower Bound}\label{2}
This section presents the lower bound of the cutoff and the proof is given in Section \ref{2.14}.
The proof is based on the analyses of the statistical properties of cyclic
dynamics described
in Section \ref{2.12} and the features evaluated on a stationary distribution in Section \ref{2.13}. Prior to the
proof, the notations are set, and the proportion chain used
throughout this paper is defined.
\subsection{Preliminaries}\label{2.21}
Denote the cyclic dynamics on $\Sigma_{n}$ as $(\sigma^{n}_{t})_{t=0}^{\infty}$, and eliminate $n$ for simplicity. When the cyclic dynamics $(\sigma_{t})$ begin at state
$\sigma_{0}$, denote the probability measure as
$\mathbb{P}_{\sigma_{0}}$, and the expectation with respect to the
probability measure as $\mathbb{E}_{\sigma_{0}}$. \par Then, consider the vector $s\in\mathbb{R}^{3}$ and let its $i$-th element as $s^{i}$. The $\ell^{p}$-norm of the vector $s$ is denoted as
$\Vert s \Vert_{p}$. Denote the vector $(\frac{1}{3}, \frac{1}{3}, \frac{1}{3})\in\mathbb{R}^{3}$
as $\bar{e}$, and let $\hat{s} = s - \bar{e}$. Consider the $3\times3$
matrix $\textbf{Q}$, and let
$\textbf{Q}^{i,k}$ be the ${(i,k)}$ element of matrix $\textbf{Q}$. Let $\textbf{Q}^{i}$ be its $i$-th row. For $\rho>0$, the subsets of
$\mathbb{R}^{3}$ are denoted as
\begin{alignat*}{2}
&\mathcal{S}=\big\{\,x \in \mathbb{R}^{3}_{+}:\Vert x
\Vert_{1}=1\,\big\},&&
\mathcal{S}_{n}=\mathcal{S}\cap\frac{1}{n}\mathbb{Z}^{3}, \\
&\mathcal{S}^{\rho} = \big\{\,s\in\mathcal{S}:\Vert
\hat{s}\Vert_{\infty}<\rho\,\big\}, &&\mathcal{S}_{n}^{\rho} =
\mathcal{S}^{\rho}\cap\frac{1}{n}\mathbb{Z}^{3},\\ &\mathcal{S}^{\rho +} =
\big\{\,s\in\mathcal{S}: s^{k}<\frac{1}{3}+\rho,\, 1 \leq k \leq 3\,
\big\},\quad&&\mathcal{S}_{n}^{\rho +} = \mathcal{S}^{\rho
+}\cap\frac{1}{n}\mathbb{Z}^{3}.
\end{alignat*}
Now, the \textit{proportion chain} $(S_{t})_{t=0}^{\infty}$ of the cyclic dynamics
$(\sigma_{t})_{t=0}^{\infty}$ is defined as
\begin{equation*}
S_{t}= \big(\,S_{t}^{1},\, S_{t}^{2},\, S_{t}^{3}\,\big),
\end{equation*}
where
\begin{equation*}
S_{t}^{k} = \frac{1}{n}\sum_{v \in V_{n}}\,\textbf{1}_{\{ \sigma_{t}(v) =
k\}} \quad k = 1,\, 2,\, 3.
\end{equation*}
Then, the proportion chain $(S_{t})$ is also a Markov chain on state space
$\mathcal{S}_{n}$ with jump probability
\begin{equation*}
\big(\,S_{t+1}^{1},\, S_{t+1}^{2},\, S_{t+1}^{3}\,\big)=
\begin{cases}
\big(\,S_{t}^{1},\, S_{t}^{2},\, S_{t}^{3}\,\big)&\text{ w.p.
}\frac{1}{2}\vspace{3pt} \\
\big(\,S_{t}^{1}-\frac{1}{n},\, S_{t}^{2}+\frac{1}{n},\,
S_{t}^{3}\,\big)&\text{ w.p. }\frac{1}{2}S_{t}^{1}\vspace{3pt} \\
\big(\,S_{t}^{1},\, S_{t}^{2}-\frac{1}{n},\,
S_{t}^{3}+\frac{1}{n}\,\big)&\text{ w.p. }\frac{1}{2}S_{t}^{2}\vspace{3pt}
\\
\big(\,S_{t}^{1}+\frac{1}{n},\, S_{t}^{2},\, S_{t}^{3}-
\frac{1}{n}\,\big)&\text{ w.p. }\frac{1}{2}S_{t}^{3}. \\
\end{cases}
\end{equation*}
This formulation is well-defined on $\mathcal{S}_{n}$, because if $S_{t}^{i}=0$ for any $i \in \{1, 2, 3\}$, then the probability of
$S_{t}^{i}$ decreasing in the next step is zero.
\subsection{Statistical Properties of the Chain}\label{2.12}
This section describes the statistical properties of the
proportion chain used in the proof. In particular, the $\ell^{2}$-norm of
$\hat{S}_{t}$ and the variance of $S_{t}$ are analyzed.
\begin{prop}\label{2.2}
Proportion chain $(S_{t})$ of the cyclic dynamics $(\sigma_{t})$ has the following $\ell^{2}$-norm contraction that depends on $n$:
\begin{equation*}
\mathbb{E}_{\sigma_{0}}\Vert \hat{S}_{t}\Vert_{2}^{2} = \Big(\,1-
\frac{3}{2n}\,\Big)^{t}\,\Vert \hat{S}_{0}\Vert_{2}^{2} \,+\,O\,\Big(\,\frac{1}{n}\,\Big).
\end{equation*}
\end{prop}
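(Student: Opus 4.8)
The plan is to establish an exact one-step recursion for the conditional expectation of $\|\hat{S}_{t+1}\|_2^2$ given $S_t$, and then solve the resulting scalar linear recursion. Writing the four possible transitions of $(S_t)$ in terms of increment vectors, set $\Delta$ to be one of $(0,0,0),(-1,1,0),(0,-1,1),(1,0,-1)$ so that $\hat{S}_{t+1} = \hat{S}_t + \frac{1}{n}\Delta$, where $\Delta = (0,0,0)$ with probability $\frac{1}{2}$ and $\Delta$ equals the remaining three vectors with probabilities $\frac{1}{2}S_t^1$, $\frac{1}{2}S_t^2$, $\frac{1}{2}S_t^3$ respectively. Expanding the square gives
\begin{equation*}
\|\hat{S}_{t+1}\|_2^2 = \|\hat{S}_t\|_2^2 + \frac{2}{n}\langle \hat{S}_t, \Delta\rangle + \frac{1}{n^2}\|\Delta\|_2^2,
\end{equation*}
so the task reduces to computing the conditional expectations of the drift term $\langle \hat{S}_t, \Delta\rangle$ and of $\|\Delta\|_2^2$.

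First I would compute $\mathbb{E}_{\sigma_0}[\Delta \mid S_t] = \frac{1}{2}(S_t^3 - S_t^1,\, S_t^1 - S_t^2,\, S_t^2 - S_t^3)$; this vector has zero coordinate-sum, hence is orthogonal to $\bar{e}$, so that $\langle \hat{S}_t, \mathbb{E}_{\sigma_0}[\Delta\mid S_t]\rangle = \langle S_t, \mathbb{E}_{\sigma_0}[\Delta\mid S_t]\rangle$. Expanding the latter and using the simplex constraint $\sum_k S_t^k = 1$ to rewrite $\sum_{i<j}S_t^i S_t^j = \tfrac{1}{2}(1 - \|S_t\|_2^2)$, I expect the inner product to collapse to $\tfrac14 - \tfrac34\|S_t\|_2^2$. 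Substituting the elementary identity $\|S_t\|_2^2 = \|\hat{S}_t\|_2^2 + \tfrac13$ (again a consequence of $\|S_t\|_1 = 1$) should make the constant cancel exactly, leaving the clean proportionality $\langle \hat{S}_t, \mathbb{E}_{\sigma_0}[\Delta\mid S_t]\rangle = -\tfrac34\|\hat{S}_t\|_2^2$. For the second-order term, each nonzero increment has $\|\Delta\|_2^2 = 2$, so $\mathbb{E}_{\sigma_0}[\|\Delta\|_2^2\mid S_t] = 2\cdot\tfrac12\sum_k S_t^k = 1$.

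Combining these and taking total expectation yields the affine recursion
\begin{equation*}
\mathbb{E}_{\sigma_0}\|\hat{S}_{t+1}\|_2^2 = \Big(1 - \tfrac{3}{2n}\Big)\,\mathbb{E}_{\sigma_0}\|\hat{S}_t\|_2^2 + \frac{1}{n^2}.
\end{equation*}
Solving this (a geometric term plus the fixed-point contribution $\tfrac{2}{3n}$) gives $\mathbb{E}_{\sigma_0}\|\hat{S}_t\|_2^2 = (1 - \tfrac{3}{2n})^t\|\hat{S}_0\|_2^2 + \tfrac{2}{3n}\big(1 - (1 - \tfrac{3}{2n})^t\big)$, and since $0 \le (1 - \tfrac{3}{2n})^t \le 1$ the second summand is $O(1/n)$ uniformly in $t$, which is exactly the claimed form.

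The computation is entirely elementary, so there is no deep obstacle; the one delicate point is verifying the exact cancellation in the drift. The vanishing of the constant term in $\langle \hat{S}_t, \mathbb{E}_{\sigma_0}[\Delta\mid S_t]\rangle$ --- which is what produces a recursion whose inhomogeneous part is merely the $O(1/n^2)$ diffusive contribution rather than an $O(1/n)$ drift offset --- hinges on both the orthogonality of $\mathbb{E}_{\sigma_0}[\Delta\mid S_t]$ to $\bar{e}$ and the constraint $\|S_t\|_1 = 1$; I would check the sign and coefficient bookkeeping there most carefully.
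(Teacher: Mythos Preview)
Your proposal is correct and follows essentially the same approach as the paper: both compute the one-step conditional expectation of $\|\hat{S}_{t+1}\|_2^2$, simplify using the simplex constraint to obtain the affine recursion $\mathbb{E}_{\sigma_0}\|\hat{S}_{t+1}\|_2^2 = (1-\tfrac{3}{2n})\mathbb{E}_{\sigma_0}\|\hat{S}_t\|_2^2 + \tfrac{1}{n^2}$, and then solve it exactly. The only cosmetic difference is that the paper expands the difference case-by-case over the four transitions while you package the same calculation via the increment vector $\Delta$ and its drift/diffusion decomposition.
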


This shows the contraction on the expectation of $\ell^{2}$-norm of
$\hat{S}_{t}$. In Proposition \ref{2.8}, this result is used to evaluate the expectation of $\hat{S}_{t}$ at the certain time. Next, the semi-synchronized coupling that contracts the norm between the two proportion chains is defined. It is
similar to the synchronized coupling of \cite{1}, but it has more
comprehensive cases.
\subsubsection{Semi-Synchronized Coupling} Consider the two cyclic
dynamics $(\sigma_{t})$ and $(\tilde{\sigma}_{t})$ starting from
$\sigma_{0}$, $\tilde{\sigma}_{0}$. Denote their proportion chains as
$(S_{t})$ and $(\tilde{S}_{t})$. At time $t+1$, the \textit{semi-synchronized
coupling} for the case of $S^{1}_{t}\geq\tilde{S}^{1}_{t}$,
$S^{2}_{t}\leq \tilde{S}^{2}_{t}$, $S^{3}_{t} \leq \tilde{S}^{3}_{t}$ is
defined as follows:
\begin{enumerate}
\setlength\itemsep{3pt}
\item Choose the colors $(\,I_{t+1},\, \tilde{I}_{t+1}\,)$ based
on the probability as stated below:
\begin{equation*}
(\,I_{t+1},\, \tilde{I}_{t+1}\,)\,=
\begin{cases}
(\,1,\,1\,)\text{ w.p. }
\tilde{S}_{t}^{1}\\
(\,2,\,2\,)\text{ w.p. }
S_{t}^{2}\\
(\,3,\,3\,)\text{ w.p. }
S_{t}^{3}\\
(\,1,\,2\,)\text{ w.p. }
\tilde{S}_{t}^{2}-S_{t}^{2}\\
(\,1,\,3\,)\text{ w.p. }
\tilde{S}_{t}^{3}-S_{t}^{3}.
\end{cases}
\end{equation*}
\item Choose the colors $(\,J_{t+1},\, \tilde{J}_{t+1}\,)$
depending on $(\,I_{t+1},\, \tilde{I}_{t+1}\,)$ based on the probability
as stated below:
\begin{itemize}
\setlength\itemsep{3pt}
\item$(\,I_{t+1},\, \tilde{I}_{t+1}\,)\, =\, (\,1,\,1\,)\Rightarrow
(\,J_{t+1},\, \tilde{J}_{t+1}\,)$ is $(\,1,\, 1\,)$ w.p. $\frac{1}{2}$,
and is $(\,2,\,2\,)$ w.p. $\frac{1}{2}.$
\item$(\,I_{t+1},\, \tilde{I}_{t+1}\,)\, =\, (\,2,\,2\,)\Rightarrow(\,J_{t+1},\, \tilde{J}_{t+1}\,)$ is $(\,2,\,2\,)$ w.p. $\frac{1}{2}$, and is
$(\,3,\,3\,)$ w.p. $\frac{1}{2}$.
\item$(\,I_{t+1},\, \tilde{I}_{t+1}\,) \,=\, (\,3,\,3\,)\Rightarrow(\,J_{t+1},\, \tilde{J}_{t+1}\,)$ is $ (\,3,\,3\,)$ w.p. $\frac{1}{2}$,
and is $(\,1,\,1\,)$ w.p. $\frac{1}{2}$.
\item$(\,I_{t+1},\, \tilde{I}_{t+1}\,)\, =\, (\,1,\,2\,)\Rightarrow(\,J_{t+1},\, \tilde{J}_{t+1}\,)$ is $(\,1,\,3\,)$ w.p. $\frac{1}{2}$,
and is $(\,2,\,2\,)$ w.p. $\frac{1}{2}$.
\item$(\,I_{t+1},\, \tilde{I}_{t+1}\,)\, =\, (\,1,\,3\,)\Rightarrow(\,J_{t+1},\, \tilde{J}_{t+1}\,)$ is $(\,1,\,1\,)$ w.p. $\frac{1}{2}$, and is
$(\,2,\,3\,)$ w.p. $\frac{1}{2}$.
\end{itemize}
\item Choose a vertex that has the color ${I}_{t+1}$ in
${\sigma}_{t}$ uniformly. Then, change its color to ${J}_{t+1}$ in
${\sigma}_{t+1}$.
\item Choose a vertex that has the color $\tilde{I}_{t+1}$
in $\tilde{\sigma}_{t}$ uniformly. Then, change its color to $\tilde{J}_{t+1}$ in
$\tilde{\sigma}_{t+1}$.
\end{enumerate}
Semi-synchronized coupling for the other cases can be defined in a similar
manner. Let $\mathbb{P}^{SC}_{\sigma_{0},\tilde{\sigma}_{0}}$ be the
underlying probability measure of this coupling, and
$\mathbb{E}^{SC}_{\sigma_{0},\tilde{\sigma}_{0}}$ be the expectation with
respect to the underlying probability measure. This coupling is
constructed to obtain the following $\ell^{1}$-contraction result.
\begin{prop}\label{2.3}
Consider the semi-synchronized coupling of two cyclic dynamics
$(\sigma_{t})$ and $(\tilde{\sigma}_{t})$. Then, the following equation holds:
\begin{equation*}
\mathbb{E}^{SC}_{\sigma_{0},\tilde{\sigma}_{0}}\Vert S_{t}-
\tilde{S}_{t}\Vert_{1}
\leq\Big(1-\frac{1}{2n}\Big)^{t}\,\Vert S_{0}-\tilde{S}_{0}\Vert_{1}.
\end{equation*}
\end{prop}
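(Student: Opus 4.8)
The plan is to establish the one-step contraction
\[
\mathbb{E}^{SC}_{\sigma_{0},\tilde{\sigma}_{0}}\big[\,\Vert S_{t+1}-\tilde{S}_{t+1}\Vert_{1}\,\big|\,\mathcal{F}_{t}\,\big]\leq\Big(1-\frac{1}{2n}\Big)\Vert S_{t}-\tilde{S}_{t}\Vert_{1},
\]
where $(\mathcal{F}_{t})$ is the filtration of the coupling, and then to iterate it. Taking the full expectation of this conditional bound and using the tower property gives $\mathbb{E}^{SC}_{\sigma_{0},\tilde{\sigma}_{0}}\Vert S_{t+1}-\tilde{S}_{t+1}\Vert_{1}\leq(1-\frac{1}{2n})\,\mathbb{E}^{SC}_{\sigma_{0},\tilde{\sigma}_{0}}\Vert S_{t}-\tilde{S}_{t}\Vert_{1}$, and a straightforward induction on $t$ yields the stated geometric bound. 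Because the dynamics is invariant under the cyclic relabeling $k\mapsto k+1$ of the colors and under swapping the two chains $\sigma\leftrightarrow\tilde{\sigma}$ (which negates $S_{t}-\tilde{S}_{t}$), every sign pattern of $S_{t}-\tilde{S}_{t}$ reduces to the distinguished case $S^{1}_{t}\geq\tilde{S}^{1}_{t}$, $S^{2}_{t}\leq\tilde{S}^{2}_{t}$, $S^{3}_{t}\leq\tilde{S}^{3}_{t}$ for which the coupling is written out; it therefore suffices to prove the one-step contraction in that case.

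For that case I would first use that $S_{t}$ and $\tilde{S}_{t}$ are probability vectors, so $S_{t}-\tilde{S}_{t}$ is zero-sum and $\Vert S_{t}-\tilde{S}_{t}\Vert_{1}=2(S^{1}_{t}-\tilde{S}^{1}_{t})=2(a+b)$, where $a:=\tilde{S}^{2}_{t}-S^{2}_{t}\geq0$ and $b:=\tilde{S}^{3}_{t}-S^{3}_{t}\geq0$ are integer multiples of $1/n$. Then I would run through the five choices of $(I_{t+1},\tilde{I}_{t+1})$. The three synchronized choices $(1,1),(2,2),(3,3)$ advance the two chains by the same increment, so they leave $S_{t+1}-\tilde{S}_{t+1}$ equal to $S_{t}-\tilde{S}_{t}$ and contribute nothing. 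The two discrepancy choices $(1,2)$ (probability $a$) and $(1,3)$ (probability $b$) are where the distance moves: tracking the increment of $S-\tilde{S}$ in each of their two sub-cases and recomputing the three absolute values, one finds that every unit of probability mass there either leaves $\Vert S-\tilde{S}\Vert_{1}$ unchanged or decreases it by $2/n$. Collecting the contributions gives an expected decrease of $(a+2b)/n$ when $a\geq 1/n$, which is at least $(a+b)/n=\frac{1}{2n}\Vert S_{t}-\tilde{S}_{t}\Vert_{1}$, exactly the amount required.

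The one genuine obstacle is the bookkeeping of $\Vert\cdot\Vert_{1}$ across a step when a coordinate difference vanishes or flips sign, since then $\Vert S_{t+1}-\tilde{S}_{t+1}\Vert_{1}$ is no longer simply $2(S^{1}_{t+1}-\tilde{S}^{1}_{t+1})$. The delicate instance is the sub-case where $\sigma$ advances a color-$1$ vertex while $\tilde{\sigma}$ is held at color $3$ and $a=0$: here the sign of the second coordinate flips, the gain in $\vert S^{1}-\tilde{S}^{1}\vert$ is exactly cancelled by the loss in $\vert S^{2}-\tilde{S}^{2}\vert$, and that sub-case contributes no decrease. Checking this boundary separately shows that when $a=0$ the expected decrease equals $b/n=\frac{1}{2n}\Vert S_{t}-\tilde{S}_{t}\Vert_{1}$, so the contraction still holds, now with equality. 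Since the alternatives $a\geq 1/n$ and $a=0$ are exhaustive, the one-step contraction holds in the distinguished case, and by the symmetry reduction in all cases, which completes the proof.
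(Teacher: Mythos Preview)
Your argument is correct and follows exactly the paper's route: establish the one-step conditional contraction in the distinguished sign pattern, invoke the cyclic and swap symmetries for the remaining patterns, and iterate. The paper records the one-step bound tersely as $\mathbb{E}^{SC}[\Vert S_{t+1}-\tilde{S}_{t+1}\Vert_{1}-\Vert S_{t}-\tilde{S}_{t}\Vert_{1}\mid\mathcal{F}_{t}]\leq -\tfrac{1}{n}(\tilde{S}^{2}_{t}-S^{2}_{t})-\tfrac{1}{n}(\tilde{S}^{3}_{t}-S^{3}_{t})$, whereas you carry out the sub-case bookkeeping explicitly (obtaining the sharper $-(a+2b)/n$ when $a\geq 1/n$ and the tight $-b/n$ when $a=0$), which is a welcome clarification rather than a different method.
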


The following propositions bound the variance of the proportion chain value at time $t$ from the
contraction of the norm between two proportion chains. The following theorem presents the
relation between the variance and the contraction. Its only
difference from \cite[Lemma 2.4]{1} is the coefficient $c>1$.
\begin{prop}\cite[Lemma 2.4]{1}\label{2.4}
Consider the Markov chain $(Z_{t})$ taking values in $\mathbb{R}^{d}$. When $Z_{0} = z$, let $\mathbb{P}_{z}$ and
$\mathbb{E}_{z}$ be its probability measure and expectation, respectively. If there exists $0<\rho<1$ and $c>1$ that satisfies
$\Vert\,\mathbb{E}_{z}[Z_{t}]-
\mathbb{E}_{\tilde{z}}[Z_{t}]\,\Vert_{2}\leq c \rho^{t}\,\Vert z-
\tilde{z}\Vert_{2}$
for every pairs of starting point $(z, \tilde{z}),$
then
\begin{equation*}
v_{t}\,=\,\sup_{z_{0}}\, \mathbb{V}ar_{z_{0}}\,(Z_{t})\,=\,\sup_{z_{0}}\,\mathbb{E}_{z_{0}}\,\Vert\, Z_{t}-
\mathbb{E}_{z_{0}}Z_{t}\,\Vert_{2}^{2}
\end{equation*}
satisfies
\begin{equation*}
v_{t}\, \leq\, c^{2}\, v_{1}\,\min\big\{\,t, \big(1-\rho^{2}\big)^{-
1}\big\}.
\end{equation*}
\end{prop}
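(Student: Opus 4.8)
The plan is to derive a one-step recursion for $v_t$ by conditioning on the first transition and then to iterate it. Throughout I use that $\mathbb{V}ar_{z_0}(Z_t)=\mathbb{E}_{z_0}\Vert Z_t-\mathbb{E}_{z_0}Z_t\Vert_2^2$ equals the trace of the covariance matrix, i.e.\ the sum of the coordinatewise variances; hence the ordinary law of total variance may be applied in each coordinate and then summed.

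First I would fix a starting point $z_0$ and condition on $\mathcal{F}_1$, the $\sigma$-algebra generated by the first step. The law of total variance gives
\begin{equation*}
\mathbb{V}ar_{z_0}(Z_t)=\mathbb{E}_{z_0}\big[\mathbb{V}ar(Z_t\mid\mathcal{F}_1)\big]+\mathbb{V}ar_{z_0}\big(\mathbb{E}[Z_t\mid\mathcal{F}_1]\big).
\end{equation*}
By the Markov property, conditionally on $\mathcal{F}_1$ the variable $Z_t$ is distributed as the chain started from $Z_1$ and run for $t-1$ steps, so $\mathbb{V}ar(Z_t\mid\mathcal{F}_1)=\mathbb{V}ar_{Z_1}(Z_{t-1})\le v_{t-1}$, whence the first term is at most $v_{t-1}$.

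For the second term, the Markov property gives $\mathbb{E}[Z_t\mid\mathcal{F}_1]=g(Z_1)$, where $g(z):=\mathbb{E}_z[Z_{t-1}]$. The contraction hypothesis, applied at time $t-1$, states exactly that $g$ is Lipschitz with constant $c\rho^{t-1}$ in the $\ell^2$-norm. I would then invoke the identity $\mathbb{V}ar(W)=\tfrac12\mathbb{E}\Vert W-W'\Vert_2^2$, valid for any square-integrable vector $W$ with independent copy $W'$; applying it to $W=g(Z_1)$ with $Z_1'$ an independent copy of $Z_1$ yields
\begin{equation*}
\mathbb{V}ar_{z_0}\big(g(Z_1)\big)=\tfrac12\,\mathbb{E}\Vert g(Z_1)-g(Z_1')\Vert_2^2\le\tfrac12\,c^2\rho^{2(t-1)}\,\mathbb{E}\Vert Z_1-Z_1'\Vert_2^2=c^2\rho^{2(t-1)}\,\mathbb{V}ar_{z_0}(Z_1)\le c^2\rho^{2(t-1)}v_1.
\end{equation*}
Combining the two bounds and taking the supremum over $z_0$ produces the recursion $v_t\le v_{t-1}+c^2\rho^{2(t-1)}v_1$.

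Finally I would telescope the increments $v_s-v_{s-1}\le c^2\rho^{2(s-1)}v_1$ over $s=1,\dots,t$, starting from $v_0=0$, to obtain $v_t\le c^2 v_1\sum_{k=0}^{t-1}\rho^{2k}$. The geometric sum is then bounded by observing that each summand is at most $1$, so the sum is at most $t$, while the full series equals $(1-\rho^2)^{-1}$, so the partial sum is at most $(1-\rho^2)^{-1}$; hence $\sum_{k=0}^{t-1}\rho^{2k}\le\min\{t,(1-\rho^2)^{-1}\}$, which gives the claim. The assumption $c>1$ enters only through these $c^2$ factors, so the argument is otherwise identical to the $c=1$ version. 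The step I expect to require the most care is the second term: the hypothesis controls only the means $\mathbb{E}_z Z_{t-1}$, and the work lies in converting this mean-contraction into genuine variance decay of the predictable part $g(Z_1)$. The independent-copy representation of the variance is what makes the Lipschitz constant $c\rho^{t-1}$ enter squared, which is precisely what drives the geometric decay of the inhomogeneous term in the recursion.
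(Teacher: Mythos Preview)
Your argument is correct. The paper does not supply its own proof of this proposition: it is quoted verbatim as \cite[Lemma 2.4]{1} (with the remark that the only change from the cited version is the extra constant $c>1$), and no proof is given before the text moves on to Proposition~\ref{2.5}. What you have written is precisely the standard proof of that lemma: condition on $\mathcal{F}_1$, use the Markov property to bound the conditional variance by $v_{t-1}$, use the mean-contraction hypothesis together with the independent-copy identity to bound the variance of the predictable part by $c^2\rho^{2(t-1)}v_1$, and iterate the resulting recursion $v_t\le v_{t-1}+c^2\rho^{2(t-1)}v_1$ from $v_0=0$. Each step is sound, including your handling of the vector case via the trace of the covariance and the observation that the role of $c>1$ is purely to carry the $c^2$ factor through.
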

\begin{prop}\label{2.5}
For the cyclic dynamics $(\sigma_{t})$ starting from $\sigma_{0}$ and all $t \geq 0$,
\begin{equation*}
\mathbb{V}ar_{\sigma_{0}}(S_{t}) = O\big(n^{-1}\big).
\end{equation*}
\end{prop}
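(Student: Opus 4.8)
The plan is to apply Proposition~\ref{2.4} to the proportion chain $(S_t)$, regarded as a Markov chain in $\mathbb{R}^{3}$ with $Z_t = S_t$. The only nontrivial task is to verify its hypothesis, namely an $\ell^{2}$-contraction of the mean of the form $\Vert\mathbb{E}_z[S_t]-\mathbb{E}_{\tilde z}[S_t]\Vert_2 \leq c\rho^{t}\Vert z-\tilde z\Vert_2$ for some $0<\rho<1$ and $c>1$, uniformly over all pairs of starting proportions $z,\tilde z\in\mathcal{S}_n$. Once this is in place, the conclusion $v_t\leq c^{2}v_1\min\{t,(1-\rho^2)^{-1}\}$ together with crude estimates of $v_1$ and of $(1-\rho^2)^{-1}$ delivers the $O(n^{-1})$ bound.

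First I would extract the required $\ell^{2}$-contraction from the $\ell^{1}$-contraction already proved in Proposition~\ref{2.3}. The point is that the semi-synchronized coupling preserves the marginal law of each chain, so for any starting configurations realizing proportions $z,\tilde z$,
\begin{equation*}
\mathbb{E}_z[S_t]-\mathbb{E}_{\tilde z}[S_t]=\mathbb{E}^{SC}_{\sigma_{0},\tilde{\sigma}_{0}}\big[S_t-\tilde S_t\big].
\end{equation*}
Applying Jensen's inequality, then the comparison $\Vert x\Vert_2\leq\Vert x\Vert_1$ on $\mathbb{R}^3$, and finally Proposition~\ref{2.3} gives
\begin{equation*}
\Vert\mathbb{E}_z[S_t]-\mathbb{E}_{\tilde z}[S_t]\Vert_2\leq\mathbb{E}^{SC}_{\sigma_{0},\tilde{\sigma}_{0}}\Vert S_t-\tilde S_t\Vert_1\leq\Big(1-\frac{1}{2n}\Big)^{t}\Vert z-\tilde z\Vert_1.
\end{equation*}
Bounding $\Vert z-\tilde z\Vert_1\leq\sqrt{3}\,\Vert z-\tilde z\Vert_2$ by Cauchy--Schwarz then shows the hypothesis holds with $\rho=1-\frac{1}{2n}$ and $c=\sqrt{3}$. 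The fact that $c=\sqrt{3}>1$ is exactly why the generalized formulation of Proposition~\ref{2.4} is needed in place of the $c=1$ version of \cite[Lemma 2.4]{1}.

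Next I would estimate $v_1$. In a single step the proportion chain either stays put or transfers one unit of mass $\tfrac1n$ between two adjacent coordinates, so $\Vert S_1-S_0\Vert_2\leq\frac{\sqrt2}{n}$ deterministically. Since the variance is the minimal mean square deviation, $v_1=\sup_{z_0}\mathbb{E}_{z_0}\Vert S_1-\mathbb{E}_{z_0}S_1\Vert_2^{2}\leq\sup_{z_0}\mathbb{E}_{z_0}\Vert S_1-S_0\Vert_2^{2}\leq\frac{2}{n^2}$, whence $v_1=O(n^{-2})$. Combining this with $1-\rho^2=\frac1n-\frac{1}{4n^2}$, so that $(1-\rho^2)^{-1}=O(n)$, Proposition~\ref{2.4} yields
\begin{equation*}
v_t\leq c^{2}v_1\,(1-\rho^2)^{-1}=3\cdot O(n^{-2})\cdot O(n)=O(n^{-1}),
\end{equation*}
which is the claim.

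I do not anticipate a genuine obstacle, as every ingredient is already available. The one place requiring care is the passage between the $\ell^{1}$ and $\ell^{2}$ norms: one must use $\ell^{2}\leq\ell^{1}$ when bounding the coupled difference and $\ell^{1}\leq\sqrt3\,\ell^{2}$ when converting the initial distance. Keeping these two conversions in the correct direction is the only delicate bookkeeping, and it is precisely this $\sqrt3$ factor that necessitates the constant $c>1$ in the variance lemma.
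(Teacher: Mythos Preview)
Your proposal is correct and follows essentially the same route as the paper: verify the hypothesis of Proposition~\ref{2.4} by converting the $\ell^{1}$-contraction of Proposition~\ref{2.3} into an $\ell^{2}$-contraction via the norm comparisons $\Vert\cdot\Vert_2\le\Vert\cdot\Vert_1\le\sqrt{3}\,\Vert\cdot\Vert_2$, arriving at the same $\rho=1-\tfrac{1}{2n}$ and $c=\sqrt{3}$. The only cosmetic difference is your bound on $v_1$: you use the variational characterization $\mathbb{E}\Vert S_1-\mathbb{E}S_1\Vert_2^2\le\mathbb{E}\Vert S_1-S_0\Vert_2^2$, whereas the paper splits via $\Vert S_1-\mathbb{E}S_1\Vert_2^2\le 2(\Vert S_1-S_0\Vert_2^2+\Vert S_0-\mathbb{E}S_1\Vert_2^2)$; both give $v_1=O(n^{-2})$ immediately.
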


\subsection{Statistics of Stationary Distribution}\label{2.13}
This section presents the proof that $\mu_{n}$ is the stationary
distribution of the cyclic dynamics. Here, $\mu_{n}$ is the uniform probability measure on state space $\Sigma_{n}$, i.e.
\begin{equation*}
\mu_{n}(\sigma) = \frac{1}{3^{n}}\quad \forall\, \sigma\in\Sigma_{n}.
\end{equation*}
The underlying probability measure, expectation, and variance are denoted
as $\mathbb{P}_{\mu_{n}}$, $\mathbb{E}_{\mu_{n}}$, and
$\mathbb{V}ar_{\mu_{n}}$, respectively.
First, recall \cite[Corollary 1.17]{2}, which describes the stationary
distribution in the irreducible Markov chain.
\begin{prop}\cite[Corollary 1.17]{2} Let $P$ be the transition matrix of the irreducible Markov chain. Then, there exists a unique stationary
distribution of the chain.
\end{prop}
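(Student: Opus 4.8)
The plan is to establish the two halves of the claim separately, \textbf{existence} and \textbf{uniqueness}, working throughout on the finite state space $\mathcal{X}$ so that $P$ is a finite stochastic matrix. This is a classical fact and one could simply defer to the cited reference, but I sketch a self-contained argument.

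For existence, I would fix an arbitrary reference state $z \in \mathcal{X}$ and let $\tau_z^+ = \min\{t \geq 1 : X_t = z\}$ denote the first return time to $z$. Irreducibility together with finiteness of $\mathcal{X}$ guarantees $\mathbb{E}_z[\tau_z^+] < \infty$. I would then introduce the occupation measure
\begin{equation*}
\tilde\pi(y) = \mathbb{E}_z\Big[\sum_{t=0}^{\tau_z^+ - 1} \mathbf{1}_{\{X_t = y\}}\Big], \qquad y \in \mathcal{X}.
\end{equation*}
A first-step (or last-exit) decomposition of the excursion from $z$ back to $z$ shows that $\tilde\pi P = \tilde\pi$; the key point is that the transition contributions, summed over one full closed excursion, telescope, because the number of entries into $y$ equals the number of exits from $y$. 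Since the total mass is $\sum_y \tilde\pi(y) = \mathbb{E}_z[\tau_z^+] \in (0,\infty)$, normalizing yields a stationary probability distribution $\pi = \tilde\pi / \mathbb{E}_z[\tau_z^+]$.

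For uniqueness, the main tool is that \emph{harmonic functions are constant}: if $h : \mathcal{X} \to \mathbb{R}$ satisfies $Ph = h$, set $M = \max_x h(x)$ and $A = \{x : h(x) = M\}$. For $x \in A$ the identity $h(x) = \sum_y P(x,y) h(y)$ forces $h(y) = M$ whenever $P(x,y) > 0$, so $A$ is closed under the chain; irreducibility then gives $A = \mathcal{X}$ and $h \equiv M$. Consequently $\ker(I - P)$ is one-dimensional. Since a real matrix and its transpose share the same rank, $\dim\ker(I - P^{\mathsf{T}}) = \dim\ker(I - P) = 1$, so the space of row vectors fixed by $P$ is also one-dimensional. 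Any two stationary distributions are therefore proportional, and since both sum to $1$ they must coincide.

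The step I expect to be the main obstacle is verifying the invariance $\tilde\pi P = \tilde\pi$ in the existence half: it is the one genuinely computational point, requiring the excursion-counting identity above rather than a one-line manipulation, and some care is needed to interchange the expectation with the random-length sum. The uniqueness half, by contrast, reduces cleanly to the maximum principle plus the rank identity once harmonicity of constants is in hand.
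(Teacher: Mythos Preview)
Your argument is correct and is the standard textbook proof: existence via the expected occupation measure over an excursion from a fixed state, and uniqueness via the maximum principle for harmonic functions combined with the rank identity $\dim\ker(I-P)=\dim\ker(I-P^{\mathsf T})$. There is nothing to fix.

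However, there is nothing to compare it against: the paper does \emph{not} supply its own proof of this proposition. It is stated purely as a citation to \cite[Corollary 1.17]{2} and used as a black box in the proof of Proposition~\ref{2.6}. Your sketch is in fact essentially the argument one finds in that reference, so in that sense you have reconstructed what the paper defers to.
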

Then, we introduce the product chain suggested in \cite[Section 12.4]{2}.
For $j = 1,\dots,n$, consider the irreducible Markov chain $(Z^{j}_{t})$ on state
space $\mathcal{X}_{j}$ with transition matrix $P_{j}$. Let
$w=(w_{1},\dots,w_{n})$ be a probability distribution of $\{1,\dots,n\}$,
where $0<w_{j}<1$. Define the product chain on state space $\mathcal{X} =
\mathcal{X}_{1}\times\cdots\times\mathcal{X}_{n}$ with transition matrix
$P$ that has the transition probability as
\begin{equation*}
P(x, y) = \sum_{j=1}^{n}w_{j}P_{j}(x_{j},y_{j})\prod_{i:i\neq
j}\textbf{1}_{\{x_{i}=y_{i}\}}
\end{equation*}
for any two states $x = (x_{1},\dots,x_{n}),\, y =
(y_{1},\dots,y_{n})\in\mathcal{X}$.
For the functions $f^{(1)},\dots,f^{(n)}$, where $f^{(j)}\colon
\mathcal{X}_{j}\to\mathbb{R}$, define the product on $\mathcal{X}$
as
\begin{equation*}
f^{(1)} \otimes f^{(2)} \otimes\cdots\otimes
f^{(n)}(x_{1},\dots,x_{n})=f^{(1)}(x_{1})\cdots f^{(n)}(x_{n}).
\end{equation*}
\begin{prop}\label{2.16} Consider the product chain of the Markov chains $(Z^{1}_{t}),\dots,(Z^{n}_{t})$ as above. For $j
= 1,\dots,n$, let $\pi^{(j)}$ be the stationary distribution of the chain $(Z^{j}_{t})$. Then, $
\pi^{(1)} \otimes\pi^{(2)}\otimes\cdots\otimes \pi^{(n)}$ is the
stationary distribution of the product chain.
\end{prop}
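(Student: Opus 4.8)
The plan is to verify directly that $\pi := \pi^{(1)}\otimes\cdots\otimes\pi^{(n)}$ satisfies the global balance equation $\sum_{x\in\mathcal{X}}\pi(x)P(x,y)=\pi(y)$ for every $y\in\mathcal{X}$. Since a product chain on finite factors is irreducible when each factor is, and stationarity pins down the invariant measure uniquely by the previously cited corollary, checking this single identity suffices.

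First I would substitute both the factorized form $\pi(x)=\prod_{k=1}^{n}\pi^{(k)}(x_{k})$ and the defining expression for $P(x,y)$, then interchange the finite sum over $j$ with the sum over the states $x$. This gives
\begin{equation*}
\sum_{x}\pi(x)P(x,y)=\sum_{j=1}^{n}w_{j}\sum_{x}\Big(\prod_{k=1}^{n}\pi^{(k)}(x_{k})\Big)P_{j}(x_{j},y_{j})\prod_{i:i\neq j}\textbf{1}_{\{x_{i}=y_{i}\}}.
\end{equation*}
The second step is the only point requiring care: for a fixed $j$, the product of indicators $\prod_{i\neq j}\textbf{1}_{\{x_{i}=y_{i}\}}$ forces $x_{i}=y_{i}$ for all $i\neq j$, so the inner sum collapses to a sum over the single free coordinate $x_{j}$, yielding $\big(\prod_{i\neq j}\pi^{(i)}(y_{i})\big)\sum_{x_{j}\in\mathcal{X}_{j}}\pi^{(j)}(x_{j})P_{j}(x_{j},y_{j})$.

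The third step invokes the factorwise stationarity $\sum_{x_{j}\in\mathcal{X}_{j}}\pi^{(j)}(x_{j})P_{j}(x_{j},y_{j})=\pi^{(j)}(y_{j})$, which replaces the inner sum by $\pi^{(j)}(y_{j})$ and reconstitutes the full product $\prod_{k=1}^{n}\pi^{(k)}(y_{k})=\pi(y)$. Finally, pulling this common factor out of the sum over $j$ leaves $\pi(y)\sum_{j=1}^{n}w_{j}=\pi(y)$, since $w$ is a probability distribution on $\{1,\dots,n\}$. This closes the argument.

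I do not anticipate a genuine obstacle here: the statement is the standard fact that independent factors remain independent under a coordinatewise-updating product dynamics, and the proof is a bookkeeping calculation. The only place to be attentive is the indicator collapse in the second step, ensuring that exactly one coordinate remains free and that the remaining marginals evaluate at $y_{i}$ rather than being summed out.
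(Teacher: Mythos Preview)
Your proposal is correct and follows essentially the same approach as the paper: both verify the global balance equation $\pi P=\pi$ by substituting the product forms of $\pi$ and $P$, collapsing the indicator product to a single free coordinate $x_{j}$, invoking the factorwise stationarity of $\pi^{(j)}$, and using $\sum_{j}w_{j}=1$. Your write-up is in fact slightly more explicit about that last step than the paper's, but the argument is identical in substance.
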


\begin{prop}\label{2.6}
The probability measure $\mu_{n}$ is a unique stationary distribution of
the cyclic dynamics $(\sigma^{n}_{t})$.
\end{prop}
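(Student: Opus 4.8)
The plan is to recognize the cyclic dynamics on $\Sigma_{n}$ as the product chain of $n$ identical single-site chains and then to invoke Proposition \ref{2.16}. First I would define, for each vertex $j\in V_{n}$, the single-site chain $(Z^{j}_{t})$ on $\mathcal{X}_{j}=Q=\{1,2,3\}$ whose transition matrix $P_{0}$ sends a color $i$ to $i$ with probability $1-p$ and to $i+1$ (read cyclically in $Q$) with probability $p$. With the uniform weights $w_{j}=\frac{1}{n}$, the update rule of the cyclic dynamics---choose a vertex uniformly and resample only its color according to $P_{0}$---is exactly the product-chain transition matrix $P(x,y)=\sum_{j=1}^{n}w_{j}P_{0}(x_{j},y_{j})\prod_{i:i\neq j}\textbf{1}_{\{x_{i}=y_{i}\}}$ introduced in Section \ref{2.13}. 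This identification is the conceptual core of the argument; once it is in place the remaining steps are mechanical.

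Next I would determine the stationary distribution of the single factor. The matrix $P_{0}$ is doubly stochastic: color $k$ receives mass $p$ from color $k-1$ (which advances to $k$) and mass $1-p$ from color $k$ itself, so every column sums to $p+(1-p)=1$. Consequently the uniform vector $\pi^{(j)}=(\frac{1}{3},\frac{1}{3},\frac{1}{3})$ satisfies $\pi^{(j)}P_{0}=\pi^{(j)}$. Since the cyclic shift $1\to 2\to 3\to 1$ renders $P_{0}$ irreducible on the three-element state space, the cited result \cite[Corollary 1.17]{2} guarantees that this uniform vector is the \emph{unique} stationary distribution of each factor.

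Applying Proposition \ref{2.16} then yields that $\pi^{(1)}\otimes\pi^{(2)}\otimes\cdots\otimes\pi^{(n)}$ is stationary for the product chain, and this product is precisely the measure assigning mass $(\frac{1}{3})^{n}=3^{-n}$ to every configuration in $\Sigma_{n}$, that is, $\mu_{n}$. Finally, for uniqueness I would observe that the full chain on $\Sigma_{n}$ is itself irreducible: any configuration can be driven to any target by repeatedly selecting each vertex in turn and advancing its color around the cycle $1\to 2\to 3\to 1$ until the desired color is reached. Hence \cite[Corollary 1.17]{2} applies once more, now on the whole space $\Sigma_{n}$, giving that $\mu_{n}$ is the unique stationary distribution.

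I do not anticipate a serious obstacle. The only points requiring care are to confirm that the product-chain formalism matches the single-site update rule exactly---in particular that the weights $w_{j}=\frac{1}{n}$ and the factor transition matrices $P_{0}$ are as claimed---and to verify the double stochasticity of $P_{0}$, both of which are immediate. The same argument works verbatim for every $p\in(0,1)$, since $P_{0}$ remains doubly stochastic and irreducible for all such $p$.
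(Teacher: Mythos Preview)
Your proposal is correct and follows essentially the same approach as the paper: identify the cyclic dynamics on $\Sigma_{n}$ as the product chain of $n$ copies of the single-site chain with weights $w_{j}=\frac{1}{n}$, verify that the uniform distribution $(\frac{1}{3},\frac{1}{3},\frac{1}{3})$ is the unique stationary distribution of each factor, apply Proposition~\ref{2.16}, and use irreducibility of the full chain for uniqueness. Your write-up supplies more detail (the double-stochasticity computation and the explicit irreducibility check) than the paper's terse version, but the structure is identical.
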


Now, define the function $S\colon\Sigma_{n}\to\mathcal{S}_{n}$ as $S(\sigma) = \big(\,S^{1}(\sigma), S^{2}(\sigma), S^{3}(\sigma)\, \big)
$, where
\begin{equation*}
S^{k}(\sigma) = \frac{1}{n}\sum_{v \in V_{n}}\,\textbf{1}_{\{ \sigma(v) =
k\}} \quad k = 1,\, 2,\, 3.
\end{equation*}
Consider the case where the element $\sigma\in\Sigma_{n}$ is distributed according to the
probability distribution $\mu_{n}$. Because the element $\sigma$ is uniformly
distributed, $n\cdot S^{k}(\sigma)$ can be interpreted as the sum of $n$
independent variables having the value $1$ with $\frac{1}{3}$ and $0$ with
$\frac{2}{3}$. Thus, $n\cdot S^{1}(\sigma),n\cdot S^{2}(\sigma),n\cdot
S^{3}(\sigma)\sim \text{Bin}(n,\, \frac{1}{3})$ and
\begin{gather*}
\mathbb{E}_{\mu_{n}}S(\sigma)=\big(\,\mathbb{E}_{\mu_{n}}S^{1}(\sigma),\,\mathbb{E}_{\mu_{n}}S^{2}(\sigma),\,\mathbb{E}_{\mu_{n}}S^{3}(\sigma)\,\big
) = \Big(\,\frac{1}{3},\,\frac{1}{3},\,\frac{1}{3}\,\Big)\\
\mathbb{V}ar_{\mu_{n}}S(\sigma) = \mathbb{V}ar_{\mu_{n}}S^{1}(\sigma)+
\mathbb{V}ar_{\mu_{n}}S^{2}(\sigma)+\mathbb{V}ar_{\mu_{n}}S^{3}(\sigma) =
\frac{2}{3n}.
\end{gather*}
\subsection{Proof of Lower Bound}\label{2.14}
This section presents the proof of the lower bound of the mixing time. The
proof of the proposition is based on \cite[Section 4.1]{1}. Denote
$t(n)=\frac{2}{3}n \log n$ and $t_{\gamma}(n)=\frac{2}{3}n\log n+\gamma
n$. The main
principle is to compare the probability of the event
$\{\Vert\hat{S}_{t_{\gamma}(n)}\Vert_{2}<\frac{r}{\sqrt{n}}\}$ under the
probability measure $\mathbb{P}_{\sigma_{0}}$ and under $\mu_{n}$. 
\begin{prop}\label{2.8}
Consider the cyclic dynamics $(\sigma_{t})$ and a fixed constant
$\epsilon>0$. Then, for all sufficiently large values of $n$, there exists
a sufficiently large $-\gamma > 0$ that satisfies $t^{(n)}_{\textnormal{mix}}(1-\epsilon) \geq t_{\gamma}(n).$
\end{prop}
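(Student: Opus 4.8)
The plan is to use $\|\hat{S}\|_{2}$ as a distinguishing statistic and to compare its law under $\mathbb{P}_{\sigma_{0}}$ at time $t_{\gamma}(n)$ against its law under the stationary measure $\mu_{n}$. For a radius $r>0$ to be chosen, set $A=\{\sigma\in\Sigma_{n}:\|\hat{S}(\sigma)\|_{2}<r/\sqrt{n}\}$. Since the total variation distance dominates the discrepancy on any single event, choosing a fixed starting state $\sigma_{0}$ gives
\[
d(t_{\gamma}(n)) \ge \mu_{n}\big(\|\hat{S}\|_{2}<r/\sqrt{n}\big) - \mathbb{P}_{\sigma_{0}}\big(\|\hat{S}_{t_{\gamma}(n)}\|_{2}<r/\sqrt{n}\big).
\]
Because $d$ is non-increasing in $t$, making the right-hand side strictly larger than $1-\epsilon$ forces $t^{(n)}_{\textnormal{mix}}(1-\epsilon)\ge t_{\gamma}(n)$. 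The scheme is therefore to show that the stationary probability of $A$ is close to $1$ while the dynamical probability is close to $0$.

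For the stationary side, Section \ref{2.13} gives $\mathbb{E}_{\mu_{n}}\hat{S}=0$ and $\mathbb{V}ar_{\mu_{n}}(S)=\mathbb{E}_{\mu_{n}}\|\hat{S}\|_{2}^{2}=\tfrac{2}{3n}$. A direct application of Chebyshev's inequality yields $\mu_{n}(\|\hat{S}\|_{2}<r/\sqrt{n})\ge 1-\tfrac{2}{3r^{2}}$, which is close to $1$ once $r$ is large. For the dynamical side, take $\sigma_{0}$ with every vertex of color $1$, so that $\|\hat{S}_{0}\|_{2}^{2}=2/3$. Using Proposition \ref{2.2} together with the approximation $(1-\tfrac{3}{2n})^{t_{\gamma}(n)}=\tfrac{1}{n}e^{-3\gamma/2}(1+o(1))$, one obtains
\[
\mathbb{E}_{\sigma_{0}}\|\hat{S}_{t_{\gamma}(n)}\|_{2}^{2}=\frac{2}{3n}e^{-3\gamma/2}(1+o(1))+O(n^{-1}).
\]

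The crux is to convert this into a lower bound on $\|\mathbb{E}_{\sigma_{0}}\hat{S}_{t_{\gamma}(n)}\|_{2}$. Writing $\mathbb{E}_{\sigma_{0}}\|\hat{S}_{t}\|_{2}^{2}=\|\mathbb{E}_{\sigma_{0}}\hat{S}_{t}\|_{2}^{2}+\mathbb{V}ar_{\sigma_{0}}(S_{t})$ and bounding the variance by $O(n^{-1})$ via Proposition \ref{2.5}, one sees that for $-\gamma$ large the deterministic term dominates all $O(n^{-1})$ contributions, giving $\|\mathbb{E}_{\sigma_{0}}\hat{S}_{t_{\gamma}(n)}\|_{2}\ge K/\sqrt{n}$ with $K=K(\gamma)\to\infty$ as $\gamma\to-\infty$. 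The reverse triangle inequality $\|\hat{S}_{t}\|_{2}\ge\|\mathbb{E}_{\sigma_{0}}\hat{S}_{t}\|_{2}-\|S_{t}-\mathbb{E}_{\sigma_{0}}S_{t}\|_{2}$ combined with Chebyshev applied to the fluctuation (whose second moment is $\mathbb{V}ar_{\sigma_{0}}(S_{t})=O(n^{-1})$) then shows $\|\hat{S}_{t_{\gamma}(n)}\|_{2}\ge(K-L)/\sqrt{n}$ with probability at least $1-C/L^{2}$, so that $\mathbb{P}_{\sigma_{0}}(\|\hat{S}_{t_{\gamma}(n)}\|_{2}<r/\sqrt{n})\le C/L^{2}$ whenever $r<K-L$.

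It then remains to balance the constants. Given $\epsilon$, first fix $r$ so that $\tfrac{2}{3r^{2}}<\epsilon/2$, next fix $L$ so that $C/L^{2}<\epsilon/2$, and finally take $-\gamma$ large enough that $K(\gamma)>r+L$. Collecting the two estimates gives $d(t_{\gamma}(n))\ge 1-\tfrac{2}{3r^{2}}-C/L^{2}>1-\epsilon$ for all sufficiently large $n$, which yields the claim. I expect the main obstacle to be the separation-of-scales step above: the expected squared norm and the variance are both of order $n^{-1}$, so one must exploit the freedom in $\gamma$ to make the mean term a factor $e^{-3\gamma/2}$ larger than the variance before the Chebyshev concentration can be applied, and the whole argument hinges on this gap.
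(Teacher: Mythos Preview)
Your proposal is correct and follows essentially the same route as the paper: both use $\|\hat{S}\|_{2}$ as the distinguishing statistic, apply Proposition~\ref{2.2} for the second moment, Proposition~\ref{2.5} for the variance bound, and Chebyshev on each side. The only cosmetic difference is that you center the fluctuation at $\|\mathbb{E}_{\sigma_{0}}\hat{S}_{t}\|_{2}$ via the reverse triangle inequality, whereas the paper centers at $\mathbb{E}_{\sigma_{0}}\|\hat{S}_{t}\|_{2}$ (bounding $\mathbb{V}ar(\|\hat{S}_{t}\|_{2})\le\mathbb{V}ar(\hat{S}_{t})$); the order in which the constants $r$, $L$, $\gamma$ are fixed is also reversed, but both arrangements work.
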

\begin{proof}
Note that 
\begin{equation*}
\Big(\,1-\frac{1}{x}\,\Big)^{x-1}\,>\,e^{-1}\,>\,\Big(\,1-
\frac{1}{x}\,\Big)^{x}
\end{equation*}
holds for all $x>1$.
Set the constant $0<\rho<\frac{2}{3}$. Then, choose a configuration $\sigma_{0}\in
\Sigma_{n}$ such that it satisfies
$\rho<\Vert\hat{S_{0}}\Vert_{2}$. Then, for $t \leq t_{\gamma}(n)$,
\begin{equation*}
\mathbb{E}_{\sigma_{0}}\Vert\hat{S}_{t}\Vert_{2}^{2}\, =\, \Big(1-
\frac{3}{2n}\Big)^{t}\,\Vert\hat{S}_{0}\Vert_{2}^{2} \,+\,O\,\Big(\,\frac{1}{n}\,\Big)\,\geq\,\frac{1}{n}\,e^{-\gamma}
\end{equation*}
holds for all sufficiently large $n$ and $-\gamma>0$ values depending on
$\rho$.\par
In addition, because $\mathbb{V}ar_{\sigma_{0}}(S_{t}) = O(n^{-1})$ by
Proposition \ref{2.5}, $\mathbb{V}ar_{\sigma_{0}}(\hat{S_{t}}) = O(n^{-
1})$ holds. It leads that
\begin{equation*}
\big(\,\mathbb{E}_{\sigma_{0}}\,\Vert\hat{S_{t}}\Vert_{2}\,\big)^{2}\,
\geq\,\mathbb{E}_{\sigma_{0}}\,\Vert\hat{S}_{t}\Vert_{2}^{2} -
\mathbb{V}ar_{\sigma_{0}}\big(\,\hat{S_{t}}\,\big)\,\geq\,\frac{1}{n}\,e^{
-\gamma}-O\,\big(\,n^{-1}\,\big),
\end{equation*}
and it implies that for all sufficiently large $n$ and $-\gamma$ values,
\begin{equation*}
\mathbb{E}_{\sigma_{0}}\,\Vert\hat{S_{t}}\Vert_{2}\,\geq\,\frac{1}{\sqrt{n
}}\,e^{-\frac{\gamma}{3}}.
\end{equation*}\par
Therefore, for $0<r<e^{-\frac{\gamma}{3}}$ and $t \leq t_{\gamma}(n)$, by Chebyshev's inequality and
Proposition \ref{2.5},
\begin{align*}
\mathbb{P}_{\sigma_{0}}\Big(\,\Vert\hat{S_{t}}\Vert_{2}\,<\,\frac{r}{\sqrt{n}}\,\Big) \leq&
\,\mathbb{P}_{\sigma_{0}}\,\Big(\,\mathbb{E}_{{\sigma}_{0}}\,\Vert\hat{S_{t
}}\Vert_{2}\,-\,\Vert\hat{S_{t}}\Vert_{2}\, >\,
\frac{1}{\sqrt{n
}}\,e^{-\frac{\gamma}{3}}\,-
\,\frac{r}{\sqrt{n}}\,\Big) \\ \leq&\,
\frac{\mathbb{V}ar_{\sigma_{0}}\,\big(\,\hat{S_{t}}\,\big)}{\big(\,\frac{1}{\sqrt{n
}}\,e^{-\frac{\gamma}{3}}-
\frac{r}{\sqrt{n}}\,\big)^{2}} = O\big(\,(\,e^{-\frac{\gamma}{3}}-r\,)^{-
2}\,\big).
\end{align*}
It follows that
\begin{equation*}
\lim_{\gamma\rightarrow-
\infty}\,\limsup_{n\rightarrow\infty}\,\mathbb{P}_{\sigma_{0}}\Big(\,\Vert\hat{S}_{t_{
\gamma}(n)}\Vert_{2}\,<\,\frac{r}{\sqrt{n}}\,\Big) = 0.
\end{equation*} \par
Now, consider the cyclic dynamics $(\sigma_{t})$ where $\sigma_{0}$
follows the probability distribution $\mu_{n}$. By the properties in
Section \ref{2.13} and application of Chebyshev's inequality,
\begin{equation*}
\mu_{n} \Big(\,
\Vert\hat{S}_{t}\Vert_{2}\,<\,\frac{r}{\sqrt{n}}\,\Big)\,\geq\,1\,-
\,\frac{O(1)}{r^{2}}
\end{equation*}
holds for all $t \geq 0.$
It can be concluded that for all $r>0$,
\begin{equation*}
\lim_{\gamma\rightarrow-
\infty}\,\liminf_{n\rightarrow\infty}\,d^{(n)}(t_{\gamma}(n))\,\geq\,1\,-
\,\frac{O(1)}{r^{2}}.
\end{equation*}
Letting $r \rightarrow \infty$, the proof is complete.
\end{proof}
\section{Upper Bound}\label{3}
This section presents the proof of the upper bound of the mixing time. In
\cite{1}, it is observed that the cutoff of the upper bound for the Glauber dynamics essentially follows from the precise bound on the
coalescing time of the two basket chains. In Section \ref{3.21}, semi-coordinatewise coupling is used to analyze the coalescence of the
proportion chains. In Section \ref{3.22}, basket chain is introduced,
and basketwise coupling is used to analyze the coalescence of the basket
chains. Based on those analyses, the upper bound of the cutoff is
obtained, as presented in Section \ref{3.44}. \par
The following proposition describes the distribution of the proportion chain at time $t(n)$.
It bounds the $\ell^{\infty}$-norm between $S_{t}$ and $\bar{e}$ over $n^{-
\frac{1}{2}}$ scale.
\begin{prop}\label{3.1}
Consider the cyclic dynamics $(\sigma_{t})$ starting at $\sigma_{0}$ and
its proportion chain $(S_{t})$. For all $r > 0$ and
$\sigma_{0}\in\Sigma_{n}$, it holds that
\begin{equation*}
\mathbb{P}_{\sigma_{0}}\,\Big(\,S_{t(n)}\,\notin\,
\mathcal{S}^{\frac{r}{\sqrt{n}}}\,\Big) = O\,\big(\,r^{-1}\,\big).
\end{equation*}
\end{prop}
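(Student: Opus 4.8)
The plan is to reduce the $\ell^{\infty}$ event to a second-moment estimate and then apply Markov's inequality. First I would observe that the proportion chain always takes values in $\mathcal{S}_{n}\subseteq\mathcal{S}$, so $S_{t(n)}\in\mathcal{S}$ holds automatically; hence, by the definition of $\mathcal{S}^{\rho}$, the event $\{S_{t(n)}\notin\mathcal{S}^{r/\sqrt{n}}\}$ coincides with $\{\Vert\hat{S}_{t(n)}\Vert_{\infty}\geq r/\sqrt{n}\}$. Since $\Vert\hat{s}\Vert_{\infty}\leq\Vert\hat{s}\Vert_{2}$ for every $s$, it suffices to control $\mathbb{P}_{\sigma_{0}}(\Vert\hat{S}_{t(n)}\Vert_{2}\geq r/\sqrt{n})$, and for this I would bound the second moment $\mathbb{E}_{\sigma_{0}}\Vert\hat{S}_{t(n)}\Vert_{2}^{2}$ and invoke Markov.

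The key input is Proposition \ref{2.2}, which gives
\begin{equation*}
\mathbb{E}_{\sigma_{0}}\Vert\hat{S}_{t(n)}\Vert_{2}^{2} = \Big(1-\frac{3}{2n}\Big)^{t(n)}\Vert\hat{S}_{0}\Vert_{2}^{2} + O\Big(\frac{1}{n}\Big).
\end{equation*}
Here $t(n)=\frac{2}{3}n\log n$, and using the elementary bound $(1-1/x)^{x}<e^{-1}$ (already recorded in the proof of Proposition \ref{2.8}) with $x=\frac{2n}{3}$, the contraction factor satisfies $(1-\frac{3}{2n})^{t(n)}\leq e^{-\frac{3}{2n}t(n)}=e^{-\log n}=n^{-1}$. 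Moreover, since $S_{0}\in\mathcal{S}$, the vector $\hat{S}_{0}=S_{0}-\bar{e}$ has $\Vert\hat{S}_{0}\Vert_{2}^{2}$ bounded by an absolute constant (at most $2/3$), uniformly over the starting configuration $\sigma_{0}\in\Sigma_{n}$. Combining these, $\mathbb{E}_{\sigma_{0}}\Vert\hat{S}_{t(n)}\Vert_{2}^{2}=O(n^{-1})$ uniformly in $\sigma_{0}$.

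Finally, Markov's inequality applied to the nonnegative random variable $\Vert\hat{S}_{t(n)}\Vert_{2}^{2}$ yields
\begin{equation*}
\mathbb{P}_{\sigma_{0}}\Big(\Vert\hat{S}_{t(n)}\Vert_{\infty}\geq\frac{r}{\sqrt{n}}\Big)\leq\mathbb{P}_{\sigma_{0}}\Big(\Vert\hat{S}_{t(n)}\Vert_{2}^{2}\geq\frac{r^{2}}{n}\Big)\leq\frac{\mathbb{E}_{\sigma_{0}}\Vert\hat{S}_{t(n)}\Vert_{2}^{2}}{r^{2}/n}=O\big(r^{-2}\big),
\end{equation*}
which is in particular $O(r^{-1})$, as claimed. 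I do not expect a genuine obstacle here: the argument is a clean combination of the $\ell^{2}$-contraction of Proposition \ref{2.2} with Markov's inequality. The only points needing minor care are verifying that the exponential factor at $t=t(n)$ collapses exactly to order $n^{-1}$ and that the bound on $\Vert\hat{S}_{0}\Vert_{2}^{2}$ is uniform in $\sigma_{0}$; both are immediate. It is worth noting that the second-moment route in fact delivers the stronger rate $O(r^{-2})$, so the stated $O(r^{-1})$ has room to spare, consistent with the fact that only a crude tail bound on $S_{t(n)}$ is required to launch the coalescence analysis of the subsequent sections.
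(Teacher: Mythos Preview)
Your proof is correct and follows essentially the same route as the paper: reduce the $\ell^{\infty}$ event to an $\ell^{2}$ event, invoke Proposition~\ref{2.2} to get $\mathbb{E}_{\sigma_{0}}\Vert\hat{S}_{t(n)}\Vert_{2}^{2}=O(n^{-1})$, and finish with Markov's inequality. The only difference is cosmetic: the paper applies Markov to $\Vert\hat{S}_{t(n)}\Vert_{2}$ and bounds its first moment via Jensen by $\big(\mathbb{E}\Vert\hat{S}_{t(n)}\Vert_{2}^{2}\big)^{1/2}$, yielding exactly $O(r^{-1})$, whereas you apply Markov directly to $\Vert\hat{S}_{t(n)}\Vert_{2}^{2}$ and obtain the sharper $O(r^{-2})$.
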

\subsection{Coalescing Proportion Chains}\label{3.21}
For the two cyclic dynamics $(\sigma_{t})$ and $(\tilde{\sigma}_{t})$, the
proportion chains $S_{t}$ and $\tilde{S}_{t}$ are made to coalesce with
high probability. First, $S_{t}-\bar{e}$ is bound with the $n^{-
\frac{1}{2}}$ scale. Then, $S_{t}$, $\tilde{S}_{t}$ is matched via
coupling under certain condition.
\subsubsection{Preliminaries}
Here, the two well-known theorems used in the current section are
introduced.
\begin{prop} \cite[Lemma 2.1 (2)]{1}\label{3.2}
Consider the discrete time process $(X_{t})_{t\geq0}$ adapted to
filtration $(\mathcal{F}_{t})_{t\geq 0}$ that starts at
$x_{0}\in\mathbb{R}$. Let the underlying probability measure as
$\mathbb{P}_{x_{0}}$, and let $\tau_{x}^{+}=\inf\{t:X_{t}\geq x\}.$ Then,
if the process $(X_{t})$ satisfies the below two conditions, the following statement
holds:
\begin{enumerate}[label=(\alph*)]
\setlength\itemsep{3pt}
\item$\exists\,\delta\geq0 \colon\mathbb{E}_{x_{0}}\,[\,X_{t+1}-
X_{t}\,\vert\,\mathcal{F}_{t}\,]\,\leq\,-\delta$ on $ \{\,X_{t}\geq0\,\}$
for all $t\geq 0.$
\item$\exists\, R > 0\colon \vert\, X_{t+1}-X_{t}\,\vert\,\leq\, R,\,
\forall\, t\geq0.$
\end{enumerate}
If $x_{0}\leq0$, then for $x_{1}>0$ and $t_{2}\geq0,$
\begin{equation*}
\mathbb{P}_{x_{0}}\,\big(\,\tau_{x_1}^{+}\,\leq\,{t_{2}}\,\big)\,\leq\,2\,
\exp\Big\{-\frac{(x_{1}-R)^{2}}{8t_{2}R^{2}}\Big\}.
\end{equation*}
\end{prop}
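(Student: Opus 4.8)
The plan is to read this as a first-passage (Azuma--Hoeffding type) estimate: condition (a) makes $(X_t)$ behave like a supermartingale in the region that matters, condition (b) provides bounded increments, and together they should yield a Gaussian upper tail for the running maximum over $t_2$ steps. Concretely, I would exhibit an exponential supermartingale, apply a Doob-type maximal inequality, and optimize the exponential parameter to recover the stated bound.

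The essential difficulty is that the negative-drift hypothesis (a) is only available on $\{X_t \ge 0\}$, so $(X_t)$ is not a global supermartingale and a naive Azuma bound cannot be applied directly. I would deal with this by localizing to the final ascent. On the event $\{\tau_{x_1}^+ \le t_2\}$ set $\beta := \sup\{t < \tau_{x_1}^+ : X_t \le 0\}$, which is well defined because $X_0 = x_0 \le 0$; by the bounded-increment condition (b) one has $X_{\beta+1} \le X_\beta + R \le R$, while $X_t > 0$ for every $\beta < t \le \tau_{x_1}^+$. Hence the process climbs by at least $x_1 - R$ over the interval $(\beta, \tau_{x_1}^+]$, and throughout this interval it sits in the region $\{X \ge 0\}$ where (a) forces the conditional increment means to be nonpositive.

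To turn this into a genuine supermartingale I would pass to the martingale transform $\mathcal{M}_t := \sum_{u=1}^{t}(X_u - X_{u-1})\,\textbf{1}\{X_{u-1} \ge 0\}$. By (a) its conditional increments are $\le -\delta \le 0$, so $\mathcal{M}_t$ is a supermartingale, and by (b) its increments are bounded by $R$. The localization above shows that on $\{\tau_{x_1}^+ \le t_2\}$ the transform performs an upward excursion of size at least $x_1 - R$ within the first $t_2$ steps. The classical exponential argument then applies: form $\exp\big(\lambda \mathcal{M}_t - \tfrac{1}{2}\lambda^2 (2R)^2 t\big)$ using the crude Hoeffding proxy $\mathbb{E}[e^{\lambda \Delta}\mid\mathcal{F}_t] \le e^{\lambda^2 (2R)^2/2}$ valid for a mean-$\le 0$ increment of range $2R$, invoke Doob's maximal inequality over $t \le t_2$, and optimize $\lambda = (x_1 - R)/(4 t_2 R^2)$. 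This produces the Gaussian factor $\exp\{-(x_1-R)^2/(8 t_2 R^2)\}$, the range $2R$ being exactly what yields the constant $8$ rather than the sharper $2$.

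I expect the genuine obstacle to be the bookkeeping around this random excursion: because the upward move of size $x_1 - R$ occurs over a \emph{sub-interval} $(\beta, \tau_{x_1}^+]$ rather than starting at time $0$, one must control the maximal drawup of the supermartingale $\mathcal{M}$ instead of merely $\mathcal{M}_{t_2} - \mathcal{M}_0$. Handling this drawup --- equivalently, reflecting at the last sign change and separating the excursion start --- is where the leading factor $2$ enters, and it is the only step requiring real care; once the localization is set up, the concentration estimate is a routine Azuma--Hoeffding computation.
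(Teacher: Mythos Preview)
The paper does not give its own proof of this proposition: it is quoted verbatim as \cite[Lemma~2.1(2)]{1} and then immediately used. So there is nothing to compare your argument against in the present paper; the proof lives in the cited reference.

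On the merits of your sketch: the overall architecture is the standard one and is correct. Passing to the martingale transform $\mathcal{M}_t=\sum_{u\le t}(X_u-X_{u-1})\mathbf{1}\{X_{u-1}\ge 0\}$ is exactly the right way to manufacture a global supermartingale out of hypothesis~(a), and the localization via $\beta=\sup\{t<\tau_{x_1}^+:X_t\le 0\}$ correctly isolates a climb of size at least $x_1-R$ inside $\mathcal{M}$. Your reading of the constant $8$ as coming from the crude range-$2R$ Hoeffding bound is also consistent with the stated exponent.

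The one place where your write-up is genuinely incomplete is the step you yourself flag: converting the climb $\mathcal{M}_{\tau_{x_1}^+}-\mathcal{M}_{\beta+1}\ge x_1-R$ into an honest maximal inequality. Because $\beta$ is a last-exit time, not a stopping time, Doob's inequality cannot be applied at $\beta+1$; and a bound on $\max_{t\le t_2}\mathcal{M}_t$ alone is insufficient, since $\mathcal{M}_{\beta+1}$ can be very negative (the supermartingale drift pushes $\mathcal{M}$ down during earlier excursions). Equivalently, one really needs to control the \emph{maximal drawup} $\max_{s\le t\le t_2}(\mathcal{M}_t-\mathcal{M}_s)$, and for a supermartingale this is not the same as bounding the running maximum and minimum separately. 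Your phrase ``reflecting at the last sign change'' gestures at the right idea but does not pin down how the factor $2$ (rather than, say, a factor of $t_2$ from a naive union bound over excursion starts) is obtained. A complete argument has to make this precise --- for instance by working with the reflected process $D_t=\mathcal{M}_t-\min_{s\le t}\mathcal{M}_s$ and exhibiting an appropriate exponential supermartingale for it, or by a careful restart/union-bound that exploits the supermartingale structure. This is the only real gap; once it is filled, the rest of your outline goes through.
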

\begin{prop}\cite[Lemma 2.3]{1}\label{3.4}
Suppose that the non-negative discrete time process $(Z_{t})_{t\geq0}$
adapted to $(\mathcal{G}_{t})_{t\geq0}$ is a supermartingale. Let $N$ be a
stopping time. If $(Z_{t})$ satisfies the below three conditions:
\begin{enumerate}[label=(\alph*)]
\setlength\itemsep{3pt}
\item$Z_{0} = z_{0}$
\item$\vert\, Z_{t+1}-Z_{t}\,\vert \,\leq\, B$
\item$\exists \sigma>0$ such that $
\mathbb{V}ar\,(\,Z_{t+1}\,\vert\,\mathcal{G}_{t}\,)\,>\,\sigma^{2}$ on the
event $\{\,N>t\,\}$,
\end{enumerate}
and $u\,>\,4B^{2}\,/\,(3\sigma^{2})$, then
$\mathbb{P}_{z_{0}}\,(N>u)\,\leq\,4z_{0}\,/\,(\sigma\sqrt{u})$.
\end{prop}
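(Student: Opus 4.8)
The plan is to reduce the statement to a reflection-type estimate for the martingale part of the process and then combine it with the supermartingale property. First I would pass to the stopped process $W_{t} := Z_{t\wedge N}$, which is again a non-negative supermartingale with increments bounded by $B$, and write its Doob decomposition $W_{t} = z_{0} + M_{t} - A_{t}$, where $(M_{t})$ is a martingale with $M_{0}=0$ and $(A_{t})$ is a predictable non-decreasing process with $A_{0}=0$. A direct computation identifies the predictable quadratic variation of $M$ with the accumulated conditional variance, $\langle M\rangle_{t} = \sum_{s=0}^{t-1}\mathrm{Var}(Z_{s+1}\mid\mathcal{G}_{s})$, so hypothesis (c) gives $\langle M\rangle_{u} \ge \sigma^{2} u$ on the event $\{N>u\}$. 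Optional stopping applied to the supermartingale supplies the second ingredient, $\mathbb{E}_{z_{0}}[W_{t\wedge N}] \le z_{0}$ for every $t$.

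The core of the argument is that survival up to time $u$ should force the process to be large, of order $\sigma\sqrt{u}$, after which the first-moment bound $\mathbb{E}_{z_{0}}[W_{t\wedge N}]\le z_{0}$ and Markov's inequality close the estimate: an inequality of the shape $\mathbb{E}_{z_{0}}[W_{u\wedge N}\,;\,N>u] \ge c\,\sigma\sqrt{u}\,\mathbb{P}_{z_{0}}(N>u)$, combined with $\mathbb{E}_{z_{0}}[W_{u\wedge N}]\le z_{0}$, yields $\mathbb{P}_{z_{0}}(N>u)\le z_{0}/(c\,\sigma\sqrt{u})$, and tracking the overshoot from $|Z_{t+1}-Z_{t}|\le B$ together with the threshold $u>4B^{2}/(3\sigma^{2})$ should produce the stated factor $4$. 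The required implication ``large accumulated variance while staying non-negative $\Rightarrow$ large current value'' is a discrete reflection principle. As a warm-up giving a bound of the correct form I would introduce $\eta=\inf\{t:M_{t}\ge\lambda\}$, apply Doob's maximal inequality to the non-negative martingale $M_{t}+z_{0}$ to obtain $\mathbb{P}_{z_{0}}(\eta\le u)\le z_{0}/(\lambda+z_{0})$, and on the complementary event use $\mathbb{E}[M_{u\wedge N\wedge\eta}^{2}]=\mathbb{E}[\langle M\rangle_{u\wedge N\wedge\eta}]\ge\sigma^{2}\,\mathbb{E}[u\wedge N\wedge\eta]$ to bound $\mathbb{P}_{z_{0}}(N>u)$, optimizing over $\lambda$.

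The hard part will be that this elementary truncated-square computation only yields a bound of the right shape but with a weaker power of $u$, so obtaining the sharp $1/\sqrt{u}$ decay and the constant $4$ requires the genuine reflection estimate, and the main obstacle to it is the downward drift $A_{t}$ of the supermartingale. Because $W_{t}=z_{0}+M_{t}-A_{t}$, a large upward excursion of the martingale part $M$ need not translate into a large value of $W$, so the reflection principle cannot be applied to $W$ directly and must be routed through $M$, whereas the variance lives on $M$ but the non-negativity constraint lives on $W$. I would control the drift through $A_{u\wedge N}\le z_{0}+\sup_{t\le u}M_{t}$, which follows from $W_{u\wedge N}\ge 0$, together with the maximal inequality for $M_{t}+z_{0}$, showing that on the survival event the drift cannot be large unless $M$ first climbs to a high level, a low-probability event. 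The second delicate point is the region near the absorbing boundary, where $B$ is comparable to the value of the process and the naive ``$W^{2}$ is a submartingale'' heuristic fails because the supermartingale drift contributes a term $-2A_{t}$-type correction of the wrong sign; the hypothesis $u>4B^{2}/(3\sigma^{2})$ is exactly what guarantees enough steps for the diffusive scale $\sigma\sqrt{u}$ to dominate $B$, and it is here that the careful reflection argument, rather than the crude bound, is indispensable.
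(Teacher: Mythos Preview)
The paper does not prove this proposition; it is quoted from \cite[Lemma~2.3]{1} and used as a black box, so there is no in-paper argument to compare your proposal against.

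On the proposal itself: the setup is correct. The Doob decomposition of the stopped process, the identification $\langle M\rangle_{t}=\sum_{s<t}\mathrm{Var}(Z_{s+1}\mid\mathcal{G}_{s})$, the bound $\langle M\rangle_{u}\ge\sigma^{2}u$ on $\{N>u\}$, the non-negativity $M_{t}+z_{0}=Z_{t\wedge N}+A_{t\wedge N}\ge 0$, and the supermartingale estimate $\mathbb{E}[Z_{u\wedge N}]\le z_{0}$ are all valid and are the natural ingredients. The gap is that the decisive step is only described, not executed. Your warm-up argument (truncate $M$ at level $\lambda$, use $\mathbb{E}[M_{u\wedge N\wedge\eta}^{2}]=\mathbb{E}[\langle M\rangle_{u\wedge N\wedge\eta}]$, then optimise over $\lambda$) balances $z_{0}/\lambda$ against $\lambda^{2}/(\sigma^{2}u)$ and delivers only $\mathbb{P}(N>u)=O\bigl((z_{0}^{2}/(\sigma^{2}u))^{1/3}\bigr)$, i.e.\ decay $u^{-1/3}$ rather than $u^{-1/2}$. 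You acknowledge this and promise a ``genuine reflection estimate'' to close the gap, but what follows is a list of obstacles and heuristics---control of $A_{u\wedge N}$ through $\sup_{t\le u}M_{t}$, behaviour near the boundary, the role of the threshold $u>4B^{2}/(3\sigma^{2})$---rather than an argument. The inequality you actually need, $\mathbb{E}[Z_{u}\,;\,N>u]\ge c\,\sigma\sqrt{u}\,\mathbb{P}(N>u)$, is never established, and for a general non-negative supermartingale with nontrivial downward drift it is not a named result one can simply invoke. As it stands the proposal is a reasonable outline that stops one genuine lemma short of a proof.
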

\subsubsection{Restriction of Proportion Chain}
\begin{prop}\label{3.3}
Consider the cyclic dynamics $(\sigma_{t})$ and its proportion chain
$(S_{t})$. For the fixed constant $r_{0}>0$ and $\gamma>0$, there exist
$C, c>0$ that satisfies the following statement: \\For all sufficiently
large $n$ and $r>\max\,\{\,3r_{0},2\,\}$, let $t = \gamma n$, $\rho_{0} =
\frac{r_{0}}{\sqrt{n}}$ and $\rho = \frac{r}{\sqrt{n}}$. Then,
\begin{equation*}
\mathbb{P}_{\sigma_{0}}\,\big(\,\exists\, 0\leq u \leq t: S_{u} \notin
\mathcal{S}_{n}^{\rho+}\,\big)\,\leq\, Ce^{-cr^{2}}
\end{equation*}
holds for all $\sigma_{0}$ such that $S_{0}\in \mathcal{S}_{n}^{\rho_{0}+}.$
\end{prop}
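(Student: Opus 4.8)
The plan is to collapse the coordinatewise escape event onto a single scalar excursion of $\Vert\hat{S}_u\Vert_2^2$ and then invoke the hitting-time estimate of Proposition \ref{3.2}. The crucial observation is that leaving $\mathcal{S}_n^{\rho+}$ forces the $\ell^2$-norm to be large: if $S_u^k\ge\frac13+\rho$ for some $k$, then $\hat{S}_u^k\ge\rho$ and hence $\Vert\hat{S}_u\Vert_2^2\ge\rho^2=r^2/n$. Therefore
\begin{equation*}
\big\{\,\exists\,0\le u\le t: S_u\notin\mathcal{S}_n^{\rho+}\,\big\}\subseteq\big\{\,\tau\le t\,\big\},\qquad \tau:=\inf\big\{\,u:\Vert\hat{S}_u\Vert_2^2\ge r^2/n\,\big\},
\end{equation*}
so it suffices to bound $\mathbb{P}_{\sigma_0}(\tau\le t)$ for the one-dimensional process $W_u:=\Vert\hat{S}_u\Vert_2^2$. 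This route avoids any union bound over the three colors.

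For the drift I would reuse the one-step identity already derived inside the proof of Proposition \ref{2.2}, namely
\begin{equation*}
\mathbb{E}_{\sigma_0}\big[\,W_{u+1}-W_u\,\vert\,\mathcal{F}_u\,\big]=-\frac{3}{2n}\,W_u+\frac{1}{n^2},
\end{equation*}
which is nonpositive exactly when $W_u\ge\frac{2}{3n}$. To meet the hypotheses of Proposition \ref{3.2} I would run $W$ only up to $\tau$ and set $X_u:=W_{u\wedge\tau}-L$ with the shift $L:=(6r_0^2+\tfrac23)/n$. Two features make this choice work. First, on $\{X_u\ge0\}$ one has $W_{u\wedge\tau}\ge L\ge\frac{2}{3n}$, so the drift of $X_u$ is $\le0$, which is hypothesis (a) with $\delta=0$ (condition (c) is immediate). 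Second, before time $\tau$ we have $W_u<r^2/n$, hence $\Vert\hat{S}_u\Vert_\infty\le\Vert\hat{S}_u\Vert_2<r/\sqrt{n}$; since a single step alters two coordinates of $\hat{S}$ by $\pm1/n$, a short computation gives $|X_{u+1}-X_u|\le R$ with $R=O(r/n^{3/2})$, which is hypothesis (b). The $r$-dependence of $R$ turns out to be harmless.

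It remains to verify the starting and target levels. Since $S_0\in\mathcal{S}_n^{\rho_0+}$ and $\sum_k\hat{S}_0^k=0$, each $\hat{S}_0^k\in(-2\rho_0,\rho_0)$, so maximizing the sum of squares gives $W_0\le6\rho_0^2=6r_0^2/n$; thus $x_0=W_0-L\le-\tfrac{2}{3n}<0$. The target level is $x_1=r^2/n-L=(r^2-6r_0^2-\tfrac23)/n$, and the hypothesis $r>\max\{3r_0,2\}$ guarantees $r^2>6r_0^2+\tfrac23$ (use $r>3r_0$ when $r_0\ge\sqrt2/3$, and $r>2$ otherwise, since then $6r_0^2+\tfrac23<2$), so $x_1>0$. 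Proposition \ref{3.2} with $t_2=t=\gamma n$ then yields
\begin{equation*}
\mathbb{P}_{\sigma_0}(\tau\le t)\le2\exp\Big\{-\frac{(x_1-R)^2}{8tR^2}\Big\},
\end{equation*}
and inserting $x_1=\Theta(r^2/n)$, $R=\Theta(r/n^{3/2})$, $t=\gamma n$ (with $R\ll x_1$ for large $n$, so $x_1-R\ge\tfrac12 x_1$) makes the exponent equal to $-\Theta(r^2/\gamma)$; the powers of $r$ and $n$ in $R$ cancel cleanly, leaving precisely a bound of the form $Ce^{-cr^2}$.

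The step I expect to be the main obstacle is reconciling the hypotheses of Proposition \ref{3.2} simultaneously: the drift of $W$ is only nonpositive above the $\Theta(1/n)$ threshold, yet the chain starts at height $\Theta(r_0^2/n)$ above $\bar{e}$, and the pathwise increment bound is only valid while $W_u$ stays below the target. Both difficulties are dissolved by the single device of stopping $W$ at $\tau$ and shifting by the constant $L$ chosen to dominate both $W_0$ and $\tfrac{2}{3n}$; what remains is the elementary check that $r>\max\{3r_0,2\}$ clears the target, i.e.\ $x_1>0$.
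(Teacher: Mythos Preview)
Your argument is correct and follows essentially the same route as the paper's proof: both reduce the escape event to an upcrossing of the scalar process $\Vert\hat{S}_u\Vert_2^2$ (the paper works with $\sum_k (S_u^k)^2=\Vert\hat{S}_u\Vert_2^2+\tfrac13$, which is the same thing up to an additive constant), shift by a constant dominating both $W_0$ and the drift threshold $\tfrac{2}{3n}$, stop the process so that the one-step increments are $O(r/n^{3/2})$, and then invoke Proposition~\ref{3.2} with $t_2=\gamma n$. The only cosmetic differences are that the paper stops at the first exit time from $\mathcal{S}_n^{\rho+}$ rather than at your $\tau$, and uses the slightly sharper observation that leaving $\mathcal{S}_n^{\rho+}$ forces $\Vert\hat{S}\Vert_2^2\ge\tfrac32\rho^2$ (not just $\rho^2$), yielding a target level $x_1=\tfrac{3r^2}{2n}-\max\{\tfrac{2}{3n},\tfrac{6r_0^2}{n}\}$; your weaker threshold $r^2/n$ is already enough.
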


\subsubsection{Semi-Coordinatewise Coupling}
This section introduces the semi-coordinatewise coupling of two cyclic
dynamics $(\sigma_{t})$ and $(\tilde{\sigma}_{t})$. This type of coupling
is used in Proposition \ref{3.14} to prove that $\Vert S_{t}-
\tilde{S}_{t}\Vert_{1}$ is a supermartingale. \par
First, semi-independent and coordinatewise coupling are defined as in
\cite{1}. For the two probability distributions $\nu$ and $\tilde{\nu}$ on
$\Omega=\{1,2,3\}$ and $i\in\{1,2,3\}$, $\{i\}$-semi-independent coupling
of $\nu$ and $\tilde{\nu}$ is a pair of random variables $(X,\tilde{X})$
on $\Omega\times\Omega$ as follows:
\begin{enumerate}
\setlength\itemsep{3pt}
\item Pick $U$ uniformly on [0, 1].
\item If $U\leq\min\big(\,\nu(i), \tilde{\nu}(i)\,\big)$, choose $(X,
\tilde{X})$ as $(i, i)$.
\item If $U>\min\big(\,\nu(i), \tilde{\nu}(i)\,\big)$,
choose $X$ and $\tilde{X}$ independently according to the following rules:
In the case of $X$, if $U<\nu(i)$, choose $i$. Otherwise, choose $i+1$
with probability $\frac{\nu(i+1)}{\nu(i+1)+\nu(i+2)}$, and choose $i+2$
with probability $\frac{\nu(i+2)}{\nu(i+1)+ \nu(i+2)}.$ In the case of
$\tilde{X}$, if $U<\tilde{\nu}(i)$, choose $i$. Otherwise, choose $i+1$
with probability
$\frac{\tilde{\nu}(i+1)}{\tilde{\nu}(i+1)+\tilde{\nu}(i+2)}$, and choose
$i+2$ with probability
$\frac{\tilde{\nu}(i+2)}{\tilde{\nu}(i+1)+\tilde{\nu}(i+2)}.$
\end{enumerate}
It is evident from the construction that random variables $X$ and
$\tilde{X}$ follow the distributions $\nu$ and $\tilde{\nu}$,
respectively.
Now, the $\{i\}$-coordinatewise coupling of two cyclic dynamics is
defined as follows:
\begin{enumerate}
\setlength\itemsep{3pt}
\item Choose two colors $I_{t+1}$ and $\tilde{I}_{t+1}$ based on
$\{i\}$-semi-independent coupling of $S_{t}$ and $\tilde{S}_{t}.$
\item Choose two colors $J_{t+1}$ and $\tilde{J}_{t+1}$ based on
$\{i\}$-semi-independent coupling, in turn based on $\nu$ and
$\tilde{\nu}$. Here, $\nu(I_{t+1})=\frac{1}{2}$,
$\nu(I_{t+1}+1)=\frac{1}{2}$, $\nu(I_{t+1}+2)=0$,
$\tilde{\nu}(\tilde{I}_{t+1})=\frac{1}{2}$,
$\tilde{\nu}(\tilde{I}_{t+1}+1)=\frac{1}{2}$ and
$\tilde{\nu}(\tilde{I}_{t+1}+2)=0$.
\item Uniformly choose the vertex of color $I_{t+1}$ in $\sigma_{t}$
and change its color to $J_{t+1}$, and uniformly choose the vertex of
color $\tilde{I}_{t+1}$ in $\tilde{\sigma}_{t}$ and change its color to
$\tilde{J}_{t+1}.$
\end{enumerate}
Finally, the \textit{semi-coordinatewise coupling} of two cyclic dynamics is
defined as follows:
\begin{enumerate}
\setlength\itemsep{3pt}
\item $\min\big(\,\vert S_{t}^{1}-\tilde{S}_{t}^{1}\vert ,\,\vert
S_{t}^{2}-\tilde{S}_{t}^{2}\vert,\,\vert S_{t}^{3}-
\tilde{S}_{t}^{3}\vert\, \big)\geq\frac{2}{n}$: Update the chains
independently.
\item$\min\big(\,\vert S_{t}^{1}-\tilde{S}_{t}^{1}\vert ,\,\vert
S_{t}^{2}-\tilde{S}_{t}^{2}\vert,\,\vert S_{t}^{3}-
\tilde{S}_{t}^{3}\vert\, \big)=\frac{1}{n}$: There exists $i\in\{1,2,3\}$
such that $\vert S_{t}^{i}-\tilde{S}_{t}^{i}\vert =\frac{1}{n}$. Choose a
minimum $i$ value that satisfies the condition, and update the chains
based on $\{i\}$-coordinatewise coupling.
\item$\min\big(\,\vert S_{t}^{1}-\tilde{S}_{t}^{1}\vert ,\,\vert
S_{t}^{2}-\tilde{S}_{t}^{2}\vert,\,\vert S_{t}^{3}-
\tilde{S}_{t}^{3}\vert\,\big)=0$: Find $i\in\{1, 2, 3\}$ such that $\vert S_{t}^{i}-
\tilde{S}_{t}^{i}\vert =0$. We may assume that
$S_{t}^{i+1}\geq\tilde{S}_{t}^{i+1}$.
Then,
\begin{enumerate}
\setlength\itemsep{3pt}
\item Choose the colors $(\,I_{t+1},\, \tilde{I}_{t+1}\,)$ based
on the probability, as stated below:
\begin{equation*}
(\,I_{t+1},\, \tilde{I}_{t+1}\,)\,=
\begin{cases}
(\,i,\,i\,)&\text{ w.p. }
S_{t}^{i}\,=\,\tilde{S}_{t}^{i}\\
(\,i+1,\,i+1\,)&\text{ w.p. } \min\,(\,S_{t}^{i+1},\,\tilde{S}_{t}^{i+1}\,)\\
(\,i+2,\,i+2\,)&\text{ w.p. }
\min\,(\,S_{t}^{i+2},\,\tilde{S}_{t}^{i+2}\,)\\
(\,i+1,\,i+2\,)&\text{ w.p. } S_{t}^{i+1}-\tilde{S}_{t}^{i+1}.
\end{cases}
\end{equation*}
\item Choose the colors $(\,J_{t+1},\, \tilde{J}_{t+1}\,)$ according to the discrete random variable $X$
dependent on $(\,I_{t+1},\, \tilde{I}_{t+1}\,)$, as stated below:
\begin{itemize}[leftmargin=*]
\setlength\itemsep{3pt}
\item$(\,I_{t+1},\, \tilde{I}_{t+1}\,)= (\,i,\,i\,)$: Select $X$ uniformly from  $\{\,(\,i,\,i\,),\,(\,i+1,\,i+1\,)\,\}$.
\item$(\,I_{t+1},\, \tilde{I}_{t+1}\,)=(\,i+1,\,i+1\,)$: Select $X$ uniformly from $\{\,(\,i+1,\,i+1\,),\, (\,i+1,\, i+2\,),\,(\,i+2, \,i+1\,),\,(\,i+2,\,i+2\,)\,\}$.
\item$(\,I_{t+1},\, \tilde{I}_{t+1}\,)=(\,i+2,\,i+2\,)$: Select $X$ uniformly from $\{\,(\,i,\,i\,),\,(\,i+2,\,i+2\,)\,\}$.
\item$(\,I_{t+1},\, \tilde{I}_{t+1}\,)=(\,i+1,\,i+2\,)$: Select $X$ uniformly from $\{\,(\,i+1,\,i\,),\,(\,i+2,\,i+2\,)\,\}$.
\end{itemize}
\item Choose a vertex uniformly that has the color ${I}_{t+1}$ in
${\sigma}_{t}$. Then, change its color to ${J}_{t+1}$ in ${\sigma}_{t+1}$.
\item Choose a vertex uniformly that has the color $\tilde{I}_{t+1}$
in $\tilde{\sigma}_{t}$. Then, change its color to $\tilde{J}_{t+1}$ in
$\tilde{\sigma}_{t+1}$.
\end{enumerate}
The coupling for the case of $S_{t}^{i+1}\leq \tilde{S}_{t}^{i+1}$ can be
similarly defined.
\end{enumerate}
Let $\mathbb{P}_{\sigma_{0}, \tilde{\sigma}_{0}}^{CC}$ be the underlying
measure, and $\mathbb{E}_{\sigma_{0}, \tilde{\sigma}_{0}}^{CC}$ be the
expectation of semi-coordinatewise coupling. This coupling is used to
prove that the $\ell^{1}$-norm between two proportion chains is a supermartingale. It is also used to guarantee the lower bound of its
variance.\par
On Proposition \ref{3.5}, we limit the $\ell^{1}$-norm between $S_{t}$ and
$\tilde{S}_{t}$ with $n^{-1}$ scale. The semi-coordinatewise coupling is used on Proposition \ref{3.14} to make this norm as a supermartingale. Proposition \ref{3.4} provides the bound of the time
spent to limit the norm.
\begin{prop}\label{3.14}
Consider the two cyclic dynamics $(\sigma_{t})$ and
$(\tilde{\sigma}_{t})$, and the corresponding proportion chains $(S_{t})$
and $(\tilde{S}_{t})$. Let $\mathbf{d}_{t} = \Vert S_{t+1}-
\tilde{S}_{t+1}\Vert_{1}-\Vert S_{t}-
\tilde{S}_{t}\Vert_{1}$. Suppose that $\Vert S_{t}-\tilde{S}_{t}\Vert_{1}\geq \frac{10}{n}$ for some $t\geq0$. If semi-coordinatewise coupling is applied in the
following step, then 
\begin{equation*}
\mathbb{E}^{CC}_{\sigma_{0}, \tilde{\sigma}_{0}}\big[\,\mathbf{d}_{t}\,\vert\,\mathcal{F}_{t}\,\big]\leq 0.
\end{equation*}
\end{prop}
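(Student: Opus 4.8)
The plan is to reduce the claim to an elementary drift identity for the coordinate differences, treating the three regimes of the coupling separately. Write $\delta_t = S_t - \tilde{S}_t$ with components $\delta_t^k = S_t^k - \tilde{S}_t^k$, where indices are read modulo $3$; since both chains live on $\mathcal{S}_n$ we have $\sum_k \delta_t^k = 0$ and $\Vert S_t - \tilde{S}_t\Vert_1 = \sum_k |\delta_t^k|$. The first ingredient, which holds under \emph{any} coupling because it uses only the marginal law of each chain, is the drift identity
\begin{equation*}
\mathbb{E}^{CC}_{\sigma_0,\tilde{\sigma}_0}\big[\,\delta_{t+1}^k - \delta_t^k \,\vert\, \mathcal{F}_t\,\big] = \frac{1}{2n}\big(\delta_t^{k-1} - \delta_t^k\big),
\end{equation*}
which follows directly from the jump probabilities of the proportion chain. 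The second ingredient is purely algebraic: for any $\delta$ with $\sum_k \delta^k = 0$,
\begin{equation*}
\sum_{k} \mathrm{sgn}(\delta^k)\big(\delta^{k-1} - \delta^k\big) = \sum_k \mathrm{sgn}(\delta^k)\,\delta^{k-1} - \sum_k |\delta^k| \leq 0,
\end{equation*}
using $\mathrm{sgn}(\delta^k)\,\delta^{k-1} \leq |\delta^{k-1}|$ and the cyclic re-indexing $\sum_k |\delta^{k-1}| = \sum_k |\delta^k|$. I will then split on $m := \min_k |\delta_t^k|$, exactly as the coupling does.

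First I would treat the regimes $m \geq \tfrac{2}{n}$ (independent updates) and $m = \tfrac{1}{n}$ (the $\{i\}$-coordinatewise coupling) together. In one step each chain relocates a single unit of mass $\tfrac{1}{n}$ between adjacent colors, so each $\delta_t^k$ changes by at most $\tfrac{2}{n}$ in absolute value. The crux is that under the prescribed coupling \emph{no nonzero coordinate changes sign}: a coordinate with $|\delta_t^k|\geq \tfrac{2}{n}$ cannot cross $0$ in a single step of size $\leq \tfrac{2}{n}$, while for the coordinate of size $\tfrac{1}{n}$ arising when $m=\tfrac{1}{n}$ (here the hypothesis $\Vert S_t-\tilde{S}_t\Vert_1\geq \tfrac{10}{n}$ forces the other two coordinates to have size $\geq \tfrac{2}{n}$, so this coordinate is unique and the others are large) the $\{i\}$-coordinatewise coupling is built so that the joint move which would flip it never occurs. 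Consequently $|\delta_{t+1}^k| = \mathrm{sgn}(\delta_t^k)\,\delta_{t+1}^k$ for every $k$, and combining the two ingredients gives $\mathbb{E}^{CC}[\mathbf{d}_t\,\vert\,\mathcal{F}_t] = \tfrac{1}{2n}\sum_k \mathrm{sgn}(\delta_t^k)(\delta_t^{k-1}-\delta_t^k) \leq 0$.

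The remaining regime $m = 0$ must be handled by direct enumeration, since here a coordinate sits at $0$, the coupling can open a fresh discrepancy there, and the sign-preservation shortcut fails. Normalizing as in the coupling definition to $\delta_t^i = 0$, $\delta_t^{i+1} = d \geq 0$, $\delta_t^{i+2} = -d$ (so $\Vert S_t - \tilde{S}_t\Vert_1 = 2d$ with $d \geq \tfrac{5}{n}$), I would run through the five outcomes of $(I_{t+1},\tilde{I}_{t+1})$. The three synchronized outcomes $(i,i)$, $(i{+}1,i{+}1)$, $(i{+}2,i{+}2)$ contribute zero in expectation, their two equally likely $(J,\tilde{J})$-branches producing increments $+\tfrac{2}{n}$ and $-\tfrac{2}{n}$ that cancel (this uses $d\geq\tfrac{1}{n}$). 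Only the discrepancy outcome $(i{+}1,i{+}2)$, of probability $d$, is productive: its branch $(i{+}1,i)$ leaves the norm unchanged (a new $\tfrac{1}{n}$ at coordinate $i$ is offset by a $\tfrac{1}{n}$ gain at $i+2$), while its branch $(i{+}2,i{+}2)$ decreases it by $\tfrac{2}{n}$, giving conditional expectation $-\tfrac{1}{n}$. Summing yields $\mathbb{E}^{CC}[\mathbf{d}_t\,\vert\,\mathcal{F}_t] = -\tfrac{d}{n} \leq 0$.

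I expect the main obstacle to be the $m=\tfrac{1}{n}$ case: one must unwind both stages of the $\{i\}$-semi-independent coupling to confirm that the single joint move capable of flipping the small coordinate (namely $\sigma$ advancing out of color $i$ while $\tilde{\sigma}$ advances into color $i$) is assigned probability zero, which is precisely the purpose of the maximal matching on the $i$-coordinate; concretely, the admissible pairs $(J_{t+1},\tilde{J}_{t+1})$ drive $\delta_t^i$ to $0$ but never past it. Once this no-flip property and the role of the $\tfrac{10}{n}$ threshold are established, every case collapses either to the drift identity or to the short enumeration above.
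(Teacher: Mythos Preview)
Your proposal is correct, and the overall three-case split on $m=\min_k|\delta_t^k|$ is the same as the paper's. The genuine difference is in the $m=\tfrac{1}{n}$ regime. The paper proves this case by exhaustive enumeration: it lists all seven values of $(I_{t+1},\tilde I_{t+1})$ under the $\{i\}$-coordinatewise coupling, computes the distribution of $\mathbf d_t$ in each, and sums. You instead isolate the structural fact that the $\{i\}$-semi-independent coupling at stage~2 forbids the one joint move $(I,J,\tilde I,\tilde J)=(i,i{+}1,i{+}2,i)$ that could send $\delta_t^i$ from $\pm\tfrac{1}{n}$ across zero (indeed, when $(I,\tilde I)=(i,i{+}2)$ one has $\nu(i)=\tilde\nu(i)=\tfrac12$, so $\{J=i\}=\{\tilde J=i\}$ a.s.\ and $(J,\tilde J)=(i{+}1,i)$ is impossible); combined with $|\delta_t^{i\pm1}|\ge\tfrac{4}{n}$ forced by $\Vert\delta_t\Vert_1\ge\tfrac{10}{n}$, no coordinate flips sign, and the drift identity plus your algebraic inequality finishes the case in one line. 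This buys you a unification of the $m\ge\tfrac{2}{n}$ and $m=\tfrac{1}{n}$ regimes and replaces a page of bookkeeping with a single structural observation; the paper's enumeration, by contrast, yields the exact value of the drift rather than just the sign. Two cosmetic slips in your $m=0$ write-up: there are four (not five) values of $(I_{t+1},\tilde I_{t+1})$, and among the synchronized ones only $(i{+}1,i{+}1)$ actually produces the cancelling $\pm\tfrac{2}{n}$ increments (with four equiprobable branches, not two); the cases $(i,i)$ and $(i{+}2,i{+}2)$ give $\mathbf d_t=0$ on both branches directly. Your conclusion $\mathbb E^{CC}[\mathbf d_t\mid\mathcal F_t]=-d/n$ is nonetheless correct and matches the paper's computation.
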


Consider the two cyclic dynamics $(\sigma_{t})$ and $(\tilde{\sigma}_{t})$
and its proportion chains. Define the time $T_{1} = \min\,\{\,t:\Vert
S_{t}-\tilde{S}_{t}\Vert_{1}<\frac{10}{n}\,\}$. In the next proposition, we prove that $T_{1}$ is bounded with high probability. Moreover, if the value of the proportion chains $S_{t}$ and $\tilde{S}_{t}$ are bounded with $n^{-\frac{1}{2}}$ scale at time zero,
then the two chains are bounded until time $T_{1}$ with high probability.
\begin{prop}\label{3.5}
Consider the two cyclic dynamics $(\sigma_{t})$ and $(\tilde{\sigma}_{t})$
that satisfy $\sigma_{0}, \tilde{\sigma}_{0}\in
\mathcal{S}^{\frac{r_{0}}{\sqrt{n}}}$ for some $r_{0}>0$. For a fixed
value of $\epsilon>0$, the following statement holds:\par
There exist constant $\gamma, r>0$ such that
\begin{equation*}
\mathbb{P}_{\sigma_{0}, \tilde{\sigma}_{0}}^{CC}\,\Big(\,T_{1}<\gamma
n,\,\max\big(\,\Vert S_{t}-\bar{e}\Vert_{2},\, \Vert\tilde{S}_{t}-
\bar{e}\Vert_{2}\,\big)<\frac{r}{\sqrt{n}}\,\, \forall\, t\leq T_{1}\,
\Big)\,\geq\, 1-\epsilon
\end{equation*}
holds for all sufficiently large $n$ that is bigger than $100r^{2}$.
\end{prop}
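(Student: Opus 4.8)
The plan is to control the two requirements of the target event separately and then combine them by a union bound: the confinement $\max(\|S_t-\bar e\|_2,\|\tilde S_t-\bar e\|_2)<r/\sqrt n$ for $t\le T_1$, which I treat with Proposition \ref{3.3}, and the coalescence $T_1<\gamma n$, which I treat with the supermartingale estimate of Proposition \ref{3.4} fed by Proposition \ref{3.14}. Throughout I write $Z_t=\|S_t-\tilde S_t\|_1$ and introduce the exit time $T_{\mathrm{exit}}=\min\{t: S_t\notin\mathcal S_n^{\rho+}\text{ or }\tilde S_t\notin\mathcal S_n^{\rho+}\}$ for a confinement radius $\rho$ of order $n^{-1/2}$ fixed at the end.

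\emph{Confinement step.} Since the semi-coordinatewise coupling is a genuine coupling, each of $(S_t)$ and $(\tilde S_t)$ has the law of the proportion chain under $\mathbb P^{CC}$, so Proposition \ref{3.3} applies to each marginal. Starting from $\mathcal S^{r_0/\sqrt n}\subseteq\mathcal S_n^{(r_0/\sqrt n)+}$, the probability that a given chain leaves $\mathcal S_n^{\rho+}$ before time $\gamma n$ is at most $Ce^{-cr^2}$, and a union bound gives $\mathbb P^{CC}(T_{\mathrm{exit}}\le\gamma n)\le 2Ce^{-cr^2}$. On $\mathcal S^{\rho+}$ one has $s^k>\tfrac13-2\rho$, hence $\|\hat s\|_\infty<2\rho$ and $\|\hat s\|_2\le\sqrt3\,\|\hat s\|_\infty<2\sqrt3\,\rho$; choosing $\rho=r/(2\sqrt3\sqrt n)$ converts confinement in $\mathcal S_n^{\rho+}$ into the desired bound $\|S_t-\bar e\|_2<r/\sqrt n$.

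\emph{Coalescence step.} I apply Proposition \ref{3.4} to the stopped process $Z_{t\wedge N}$ with $N=T_1\wedge T_{\mathrm{exit}}$. This process is non-negative, and by Proposition \ref{3.14} it is a supermartingale: on $\{N>t\}$ we have $\|S_t-\tilde S_t\|_1\ge 10/n$, so $\mathbb E^{CC}[\mathbf d_t\mid\mathcal F_t]\le0$, while after $N$ it is frozen. Each chain moves at most two coordinates by $1/n$ per step, so $|Z_{t+1}-Z_t|\le 4/n=:B$. The remaining hypothesis, the variance lower bound $\mathrm{Var}(Z_{t+1}\mid\mathcal G_t)>\sigma^2$ on $\{N>t\}$, is the crux of the argument. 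I would establish it by revisiting the case split of Proposition \ref{3.14}: in each regime of $\min_i|S_t^i-\tilde S_t^i|$ there is at least one update, occurring with probability bounded below, that changes $Z_t$ by $\pm2/n$. For example, when a coordinate difference vanishes, the synchronized choice $(I_{t+1},\tilde I_{t+1})=(i+1,i+1)$ occurs with probability $\min(S_t^{i+1},\tilde S_t^{i+1})\ge\tfrac13-2\rho$, after which the coupling randomizes the two targets independently so that $\mathbf d_t=\pm2/n$ with probability $\tfrac14$ each; the $\min=1/n$ and $\min\ge2/n$ regimes are analogous. Since $T_{\mathrm{exit}}>t$ on $\{N>t\}$ keeps every coordinate within $2\rho<1/5$ of $\tfrac13$, these probabilities are bounded below by a universal constant, yielding $\sigma^2=c_0/n^2$ with $c_0$ independent of $r$ and $n$.

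\emph{Conclusion.} With $z_0=Z_0\le 6r_0/\sqrt n$, $B=4/n$, and $\sigma^2=c_0/n^2$, the threshold $4B^2/(3\sigma^2)=64/(3c_0)$ is a constant, so $u=\gamma n$ exceeds it for large $n$, and Proposition \ref{3.4} gives
\[
\mathbb P^{CC}(N\ge\gamma n)\ \le\ \frac{4z_0}{\sigma\sqrt{\gamma n}}\ \le\ \frac{24\,r_0}{\sqrt{c_0\,\gamma}}.
\]
I fix $\gamma$ large enough that the right-hand side is below $\epsilon/2$, which is possible as $c_0$ and $r_0$ are already determined; with $\gamma$ fixed the constants $C,c$ of Proposition \ref{3.3} are fixed, and I then take $r>\max\{3r_0,2\}$ large enough that $2Ce^{-cr^2}\le\epsilon/2$, imposing $n>100r^2$ so that $\rho<1/10$ and the coordinate lower bound above is valid. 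On the complement of $\{T_{\mathrm{exit}}\le\gamma n\}$ and $\{N\ge\gamma n\}$ one has $T_{\mathrm{exit}}>\gamma n$ and $N<\gamma n$, which force $N=T_1<\gamma n$ together with confinement of both chains on $[0,\gamma n]\supseteq[0,T_1]$; this is exactly the target event, so its probability is at least $1-\epsilon$. The main obstacle is the uniform variance lower bound, since it requires checking that every branch of the semi-coordinatewise coupling retains $\Theta(n^{-2})$ fluctuation, and it is precisely there that the confinement hypothesis is consumed.
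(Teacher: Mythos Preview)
Your proof is correct and follows essentially the same approach as the paper: define the stopping time $N=T_1\wedge T_{\mathrm{exit}}$, use Proposition~\ref{3.14} to make $\|S_t-\tilde S_t\|_1$ a supermartingale before $N$, invoke Proposition~\ref{3.4} with a variance lower bound of order $n^{-2}$ (whose constant is independent of $r$ thanks to the confinement and $n>100r^2$) to get $\mathbb P(N>\gamma n)\le\epsilon/2$ for $\gamma$ chosen independently of $r$, and then use Proposition~\ref{3.3} to choose $r$ large so that $\mathbb P(T_{\mathrm{exit}}\le\gamma n)\le\epsilon/2$. You are in fact more explicit than the paper about the variance lower bound and about the passage between the $\mathcal S^{\rho+}$ confinement of Proposition~\ref{3.3} and the $\ell^2$ condition in the statement; the paper glosses over both.
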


Because the $\ell^{1}$-norm between two proportion chains is bounded, the
coalescence of these chains can be proved with semi-synchronized coupling.
\begin{prop}\label{3.6}
Consider the two cyclic dynamics $(\sigma_{t})$ and $(\tilde{\sigma}_{t})$
where $\Vert S_{0}-\tilde{S}_{0}\Vert_{1}<\frac{10}{n}$ holds. For the fixed
constant $\epsilon>0$, there exists a sufficiently large $\gamma > 0$ such
that if $t\geq\gamma n,$
\begin{equation*}
\mathbb{P}^{SC}_{\sigma_{0},
\tilde{\sigma}_{0}}\,\big(\,S_{t}=\tilde{S}_{t}\,\big)\,\geq\,1-\epsilon.
\end{equation*}
\end{prop}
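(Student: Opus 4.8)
The plan is to leverage the $\ell^{1}$-contraction of Proposition \ref{2.3} together with the discreteness of the proportion chain, which forces coalescence to be detectable at the $\tfrac{2}{n}$ scale. The key observation is arithmetic: since $S_{t},\tilde{S}_{t}\in\mathcal{S}_{n}=\mathcal{S}\cap\tfrac{1}{n}\mathbb{Z}^{3}$, every coordinate difference $S_{t}^{k}-\tilde{S}_{t}^{k}$ is an integer multiple of $\tfrac{1}{n}$, and the three differences sum to $0$ because both vectors have unit $\ell^{1}$-norm. Consequently $\Vert S_{t}-\tilde{S}_{t}\Vert_{1}=2\sum_{k:\,S_{t}^{k}>\tilde{S}_{t}^{k}}\big(S_{t}^{k}-\tilde{S}_{t}^{k}\big)$ is always an \emph{even} multiple of $\tfrac{1}{n}$. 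In particular $S_{t}\neq\tilde{S}_{t}$ if and only if $\Vert S_{t}-\tilde{S}_{t}\Vert_{1}\geq\tfrac{2}{n}$, so the coalescence event can be controlled purely through the expected $\ell^{1}$-distance, for which Proposition \ref{2.3} is available.

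With this in hand, the second step is a first-moment (Markov) bound. Running the two chains under the semi-synchronized coupling and applying Proposition \ref{2.3} with the hypothesis $\Vert S_{0}-\tilde{S}_{0}\Vert_{1}<\tfrac{10}{n}$, I would estimate
\begin{equation*}
\mathbb{P}^{SC}_{\sigma_{0},\tilde{\sigma}_{0}}\big(S_{t}\neq\tilde{S}_{t}\big)
=\mathbb{P}^{SC}_{\sigma_{0},\tilde{\sigma}_{0}}\Big(\Vert S_{t}-\tilde{S}_{t}\Vert_{1}\geq\tfrac{2}{n}\Big)
\leq\frac{n}{2}\,\mathbb{E}^{SC}_{\sigma_{0},\tilde{\sigma}_{0}}\Vert S_{t}-\tilde{S}_{t}\Vert_{1}
\leq 5\Big(1-\frac{1}{2n}\Big)^{t}.
\end{equation*}
Then I would convert the geometric factor into an exponential one using the elementary inequality invoked in the proof of Proposition \ref{2.8}: taking $x=2n$ gives $(1-\tfrac{1}{2n})^{2n}<e^{-1}$, hence for $t\geq\gamma n$ one has $(1-\tfrac{1}{2n})^{t}\leq(1-\tfrac{1}{2n})^{\gamma n}<e^{-\gamma/2}$ for every $n$. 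Therefore $\mathbb{P}^{SC}_{\sigma_{0},\tilde{\sigma}_{0}}(S_{t}\neq\tilde{S}_{t})<5e^{-\gamma/2}$ whenever $t\geq\gamma n$, and it suffices to fix $\gamma$ large enough that $5e^{-\gamma/2}\leq\epsilon$.

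I do not expect a serious analytic obstacle here, since essentially all the work is front-loaded into the already-established contraction estimate; the genuine crux is the arithmetic observation that ``not yet coalesced'' means $\ell^{1}$-distance at least $\tfrac{2}{n}$. The one point requiring care is bookkeeping about the coupling itself. I would want to confirm that the semi-synchronized coupling is genuinely well defined on the full relative configuration of $(S_{t},\tilde{S}_{t})$—in particular that the ``other cases'' alluded to in its construction cover every sign pattern of the coordinate differences—and that once $S_{t}=\tilde{S}_{t}$ the two chains move identically, so that coalescence is absorbing. The latter makes $\mathbb{P}^{SC}(S_{t}=\tilde{S}_{t})$ non-decreasing in $t$, consistent with the ``for all $t\geq\gamma n$'' form of the claim; but since the Markov bound above is already monotone in $t$ through the factor $(1-\tfrac{1}{2n})^{t}$, absorption is not strictly needed for the stated inequality.
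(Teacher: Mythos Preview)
Your proof is correct and follows essentially the same route as the paper: apply the $\ell^{1}$-contraction of Proposition~\ref{2.3} to the hypothesis $\Vert S_{0}-\tilde S_{0}\Vert_{1}<\tfrac{10}{n}$, then use Markov's inequality on the event $\{S_{t}\neq\tilde S_{t}\}$ to force $\gamma$ large. The paper simply uses the coarser threshold $\tfrac{1}{n}$ in place of your sharper $\tfrac{2}{n}$ (writing $\mathbb{P}^{SC}(S_t\neq\tilde S_t)=\mathbb{P}^{SC}(\Vert S_t-\tilde S_t\Vert_1\geq\tfrac{1}{n})$), so your parity observation is a harmless refinement rather than a genuinely different idea.
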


\subsection{Coalescing Basket Chains}\label{3.22}
For the two cyclic dynamics $(\sigma_{t})$ and $(\tilde{\sigma}_{t})$, the
basket chains are made to coalesce with high probability. 
The theorems and proofs
presented throughout this section are similar to \cite{1}; however,
because the conditions are slightly different, detailed analyses are
provided here for completeness.\par
First, define $\mathcal{B}$ as a $3$-partition of
$V_{n}=\{1,\dots,n\}$, and let $\mathcal{B} = (
\mathcal{B}_{m})_{m=1}^{3}$.
We call these $\mathcal{B}_{m}$ partitions as baskets, and denote $\mathcal{B}$ as the $\lambda$-partition if
$\vert\mathcal{B}_{m}\vert>\lambda n$ holds for all $m$. For the
configuration $\sigma\in\Sigma_{n}$, define $3\times3$ matrix
$\mathbf{S}(\sigma)$ representing the proportions of the number of the
vertices in each basket, i.e.
\begin{equation*}
\mathbf{S}^{m, k}\,(\sigma) \,=\,
\frac{1}{\vert\mathcal{B}_{m}\vert}\,\sum_{v\in\mathcal{B}_{m}}\textbf{1}_
{\{\sigma(v) = k\}}\quad m,\, k\,\in\, \{\,1,\, 2, \,3\,\}.
\end{equation*}
The \textit{basket chain} $(\mathbf{S}_{t})$ of the cyclic dynamics $(\sigma_{t})$
is defined as $\mathbf{S}_{t}=\mathbf{S}(\sigma_{t})$. Note that the
basket chain is a Markov chain.\par
Recalling the sets $\mathcal{S}$, $\mathcal{S}^{\rho}$,
$\mathcal{S}^{\rho+}$ in Section \ref{2.21}, $\mathbb{S}$ is defined as
the set of $3\times3$ matrices where each row of each matrix exist in
$\mathcal{S}$. This set is denoted as $\prod_{m=1}^{3}\mathcal{S}$, and
the sets $\mathbb{S}^{\rho}=\prod_{m=1}^{3}\mathcal{S}^{\rho}$ and
$\mathbb{S}^{\rho+}=\prod_{m=1}^{3}\mathcal{S}^{\rho+}$ are similarly
defined. \par
Proposition \ref{3.7} provides the contraction of the basket chains.
Proposition \ref{3.8} limits the distribution of the basket chains.
\begin{prop}\label{3.7}
Suppose that $\mathcal{B}$ is a $\lambda$-partition for some $\lambda>0$.
Consider the basket chain $(\mathbf{S}_{t})$ and the proportion chain
$(S_{t})$ of the cyclic dynamics $(\sigma_{t})$, and define the $3\times3$
matrix $\mathbf{Q}_{t}$ as $\mathbf{Q}^{m,k}_{t}=\mathbf{S}^{m,k}_{t}-
S^{k}_{t}.$ Then, the following holds:
\begin{equation*}
\mathbb{E}_{\sigma_{0}}\big[\,\big(\,\textbf{Q}^{m,k}_{t+1}\,\big)^{2}\,\big]\,=\,\Big(\,1
\,-
\,\frac{1}{n}\,\Big)\,\mathbb{E}_{\sigma_{0}}\big[\,\big(\,\textbf{Q}^{m,k}_{t}\,\big)^{2}\,\big]\,+\,\frac{1}{n}\,\mathbb{E}_{\sigma_{0}}\big[\,\textbf{Q}^{m,k-
1}_{t}\,\textbf{Q}^{m,k}_{t}\,\big]\,+\,O\,\Big(\,\frac{1}{n^{2}}\,\Big
).
\end{equation*}
\end{prop}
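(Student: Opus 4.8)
The plan is to track the one-step evolution of $(\mathbf{Q}_t^{m,k})^2$ by writing $\Delta_t = \mathbf{Q}_{t+1}^{m,k}-\mathbf{Q}_t^{m,k}$ and expanding
\[
(\mathbf{Q}_{t+1}^{m,k})^2 = (\mathbf{Q}_t^{m,k})^2 + 2\,\mathbf{Q}_t^{m,k}\,\Delta_t + \Delta_t^2,
\]
then conditioning on $\mathcal{F}_t$. Since $\mathbf{Q}_t^{m,k}=\mathbf{S}_t^{m,k}-S_t^k$, everything reduces to understanding the increments of the basket proportion $\mathbf{S}^{m,k}$ and of the global proportion $S^k$ separately, after which the full expectation $\mathbb{E}_{\sigma_0}$ is taken using the tower property.

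First I would compute the drift. Write $b_m=|\mathcal{B}_m|$ and recall colors are read modulo $3$. A single step alters $\mathbf{S}^{m,k}$ only when the chosen vertex lies in $\mathcal{B}_m$: it rises by $1/b_m$ when a color-$(k-1)$ vertex of $\mathcal{B}_m$ flips to $k$, and falls by $1/b_m$ when a color-$k$ vertex of $\mathcal{B}_m$ flips to $k+1$. Each vertex is chosen with probability $1/n$ and each flip occurs with probability $\tfrac12$, so counting the $b_m\mathbf{S}_t^{m,k-1}$ and $b_m\mathbf{S}_t^{m,k}$ relevant vertices gives
\[
\mathbb{E}\big[\mathbf{S}_{t+1}^{m,k}-\mathbf{S}_t^{m,k}\mid\mathcal{F}_t\big] = \frac{1}{2n}\big(\mathbf{S}_t^{m,k-1}-\mathbf{S}_t^{m,k}\big),
\]
where the factor $b_m$ cancels against the jump size $1/b_m$. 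The proportion-chain transition rule yields the analogous $\mathbb{E}[S_{t+1}^k-S_t^k\mid\mathcal{F}_t]=\tfrac{1}{2n}(S_t^{k-1}-S_t^k)$, and subtracting gives $\mathbb{E}[\Delta_t\mid\mathcal{F}_t]=\tfrac{1}{2n}(\mathbf{Q}_t^{m,k-1}-\mathbf{Q}_t^{m,k})$. Hence the cross term is
\[
2\,\mathbf{Q}_t^{m,k}\,\mathbb{E}[\Delta_t\mid\mathcal{F}_t] = \frac{1}{n}\,\mathbf{Q}_t^{m,k-1}\mathbf{Q}_t^{m,k} - \frac{1}{n}\,(\mathbf{Q}_t^{m,k})^2,
\]
which supplies exactly the two leading terms of the stated recursion.

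Next I would bound the quadratic error $\mathbb{E}[\Delta_t^2\mid\mathcal{F}_t]$, and this is where the $\lambda$-partition hypothesis enters. Because $b_m>\lambda n$, a single step moves $\mathbf{S}^{m,k}$ by at most $1/b_m\le 1/(\lambda n)$ and $S^k$ by at most $1/n$, so $|\Delta_t|\le 1/(\lambda n)+1/n=O(1/n)$ deterministically, giving $\Delta_t^2=O(1/n^2)$ with a constant depending only on $\lambda$. Assembling the three pieces and applying $\mathbb{E}_{\sigma_0}$ produces the claimed identity. The computation is largely bookkeeping; the one delicate point is the drift of $\mathbf{S}^{m,k}$, where one must carry the cyclic color indices mod $3$ correctly and verify that $b_m$ genuinely cancels. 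That cancellation is the crux: it is precisely what forces the recursion for $(\mathbf{Q}^{m,k})^2$ to decouple from the basket sizes, while the lower bound $\lambda n$ on $|\mathcal{B}_m|$ makes the $O(1/n^2)$ error uniform in $m$ and $k$.
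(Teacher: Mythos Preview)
Your proposal is correct and follows essentially the same approach as the paper. The paper also computes the one-step conditional change $\mathbb{E}_{\sigma_0}[(\mathbf{Q}^{m,k}_{t+1})^2-(\mathbf{Q}^{m,k}_t)^2\mid\mathcal{F}_t]$ and then takes a full expectation; it does so by explicitly listing the four events (vertex in/out of $\mathcal{B}_m$, color becoming or ceasing to be $k$) with their probabilities $p_1,\dots,p_4$, which amounts to the same drift-plus-quadratic decomposition you wrote out via $\Delta_t$.
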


\begin{prop}\label{3.8}
For the cyclic dynamics $(\sigma_{t})$, consider the $\lambda$-partition basket $\mathcal{B}$ for some $\lambda > 0$. Assume that either of these conditions are satisfied:
\begin{enumerate}
\setlength\itemsep{3pt}
\item $t\,\geq\, t(n).$
\item $\textbf{S}_{0}\in\mathbb{S}^{\frac{r_{0}}{\sqrt{n}}}$ and
$t\,\leq\,\gamma_{0}n$ for some constant $r_{0}, \gamma_{0} > 0.$
\end{enumerate}
Then, for sufficiently large $r>0$, the following holds for all sufficiently large $n$:
\begin{equation*}
\mathbb{P}_{\sigma_{0}}\,\Big(\,\textbf{S}_{t}\,\notin\,\mathbb{S}^{\frac
{r}{\sqrt{n}}}\,\Big)\,=\,O\,\big(\,r^{-2}\,\big).
\end{equation*}
\end{prop}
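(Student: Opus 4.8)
The plan is to reduce the estimate on the basket matrix $\mathbf{S}_t$ to two independent fluctuation bounds. For each basket $m$ and color $k$ I would write
\[
\mathbf{S}_t^{m,k}-\tfrac13 \,=\, \big(\mathbf{S}_t^{m,k}-S_t^k\big)+\big(S_t^k-\tfrac13\big) \,=\, \mathbf{Q}_t^{m,k}+\hat{S}_t^{k},
\]
so that $\Vert \hat{\mathbf{S}}_t^{m}\Vert_\infty \le \Vert \mathbf{Q}_t^{m}\Vert_\infty + \Vert \hat{S}_t\Vert_\infty$, where $\hat{\mathbf{S}}_t^{m}=\mathbf{S}_t^{m}-\bar e$ and $\mathbf{Q}_t^{m}$ denotes the $m$-th row of $\mathbf{Q}_t$. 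It then suffices to show that each of $\Vert \mathbf{Q}_t^{m}\Vert_\infty$ and $\Vert \hat{S}_t\Vert_\infty$ exceeds $\tfrac{r}{2\sqrt n}$ with probability $O(r^{-2})$; a union bound over the three baskets and the two sources yields the claim. I would treat the two hypotheses in parallel, since both force the relevant second moments to be of order $n^{-1}$.

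For the proportion term, Proposition \ref{2.2} gives $\mathbb{E}_{\sigma_0}\Vert \hat{S}_t\Vert_2^2 = (1-\tfrac{3}{2n})^t\Vert \hat{S}_0\Vert_2^2 + O(n^{-1})$. Under condition (1), $t\ge t(n)=\tfrac23 n\log n$ makes $(1-\tfrac{3}{2n})^t\le e^{-\log n}=n^{-1}$ while $\Vert \hat{S}_0\Vert_2^2$ is bounded because $S_0\in\mathcal{S}$; under condition (2), the assumption $\mathbf{S}_0\in\mathbb{S}^{r_0/\sqrt n}$ forces $\Vert \hat{S}_0\Vert_\infty<r_0/\sqrt n$, since $\hat{S}_0$ is a convex combination of the rows $\hat{\mathbf{S}}_0^m$, whence $\Vert \hat{S}_0\Vert_2^2=O(n^{-1})$. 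In either regime $\mathbb{E}_{\sigma_0}\Vert \hat{S}_t\Vert_2^2=O(n^{-1})$, and Chebyshev's inequality combined with $\Vert \hat{S}_t\Vert_\infty\le\Vert \hat{S}_t\Vert_2$ gives $\mathbb{P}_{\sigma_0}(\Vert \hat{S}_t\Vert_\infty\ge\tfrac{r}{2\sqrt n})=O(r^{-2})$.

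The heart of the argument is the basket fluctuation $\mathbf{Q}_t^{m}$, which I would control through Proposition \ref{3.7}. Setting $W_t^{m}=\sum_{k=1}^3\mathbb{E}_{\sigma_0}[(\mathbf{Q}_t^{m,k})^2]$, I sum the recursion of Proposition \ref{3.7} over $k$. The decisive observation is that $\sum_{k}\mathbf{Q}_t^{m,k}=\sum_k(\mathbf{S}_t^{m,k}-S_t^k)=0$; squaring this identity and noting that with three colors the cyclic products exhaust all unordered pairs gives $\sum_k\mathbb{E}[\mathbf{Q}_t^{m,k-1}\mathbf{Q}_t^{m,k}]=\sum_{k<k'}\mathbb{E}[\mathbf{Q}_t^{m,k}\mathbf{Q}_t^{m,k'}]=-\tfrac12 W_t^{m}$. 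Substituting, the cross terms collapse and I obtain
\[
W_{t+1}^{m}=\Big(1-\tfrac1n\Big)W_t^{m}+\tfrac1n\Big(-\tfrac12 W_t^{m}\Big)+O(n^{-2})=\Big(1-\tfrac{3}{2n}\Big)W_t^{m}+O(n^{-2}),
\]
which is exactly the contraction of Proposition \ref{2.2}. Solving yields $W_t^{m}=(1-\tfrac{3}{2n})^t W_0^{m}+O(n^{-1})$ with the error uniform in $t$, and the same dichotomy as before ($W_0^m$ bounded with $t\ge t(n)$ under (1); $|\mathbf{Q}_0^{m,k}|<2r_0/\sqrt n$ hence $W_0^m=O(n^{-1})$ under (2)) gives $W_t^{m}=O(n^{-1})$. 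Since $\Vert \mathbf{Q}_t^{m}\Vert_\infty^2\le W_t^{m}$, Chebyshev's inequality gives $\mathbb{P}_{\sigma_0}(\Vert \mathbf{Q}_t^{m}\Vert_\infty\ge\tfrac{r}{2\sqrt n})=O(r^{-2})$.

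I expect the cross term in Proposition \ref{3.7} to be the only real obstacle: a crude Cauchy--Schwarz bound $|\mathbb{E}[\mathbf{Q}_t^{m,k-1}\mathbf{Q}_t^{m,k}]|\le\max_k\mathbb{E}[(\mathbf{Q}_t^{m,k})^2]$ only yields $W_{t+1}^{m}\le W_t^{m}+O(n^{-2})$, which over the $t\asymp n\log n$ steps of condition (1) accumulates to the useless bound $O(n^{-1}\log n)$. It is the exact cancellation from $\sum_k\mathbf{Q}_t^{m,k}=0$ that restores the geometric factor $1-\tfrac{3}{2n}$ and keeps $W_t^m$ at order $n^{-1}$; once this is secured, the rest is routine second-moment estimation.
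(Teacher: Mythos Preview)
Your argument is correct and follows essentially the same route as the paper: decompose $\hat{\mathbf{S}}_t^{m}$ into $\mathbf{Q}_t^{m}+\hat{S}_t$, control $\hat{S}_t$ via Proposition~\ref{2.2}, and control $\mathbf{Q}_t^{m}$ by summing Proposition~\ref{3.7} over $k$ and using $\sum_k\mathbf{Q}_t^{m,k}=0$ to collapse the cross term into $-\tfrac12 W_t^m$, yielding the same $(1-\tfrac{3}{2n})$ contraction. Your explanation of the cross-term cancellation is in fact more explicit than the paper's, which performs the same computation without spelling out the identity; the only cosmetic difference is that the paper also cites Proposition~\ref{3.3} alongside Proposition~\ref{2.2} for the $\hat{S}_t$ bound, but your second-moment argument via Proposition~\ref{2.2} alone already suffices for the $O(r^{-2})$ tail.
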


\subsubsection{Basketwise Coupling}
The \textit{basketwise coupling} introduced in \cite{1} is utilized herein. The
objective of basketwise coupling is to enable the two basket chains to
coalesce. This coupling is used in Proposition \ref{3.9} to prove that
$\Vert\textbf{S}{}^{m}_{t}-\tilde{\textbf{S}}{}^{m}_{t}\Vert_{1}$ is a
supermartingale. \par
Consider the two cyclic dynamics $(\sigma_{t})$ and
$(\tilde{\sigma_{t}})$, where $S_{0} = \tilde{S}_{0}$. The coupling begins
at $t = 0$, $m = 1$.
While $\textbf{S}{}_{t}^{m}\neq\tilde{\textbf{S}}{}^{m}_{t}$,
\begin{enumerate}
\setlength\itemsep{3pt}
\item Choose the color $I_{t+1} = \tilde{I}_{t+1}$ according to the distribution
$S_{t}= \tilde{S}_{t}$.
\item Choose the color $J_{t+1} = \tilde{J}_{t+1}$ as $I_{t+1} =
\tilde{I}_{t+1}$ with probability $\frac{1}{2}$, and $I_{t+1}+1 =
\tilde{I}_{t+1}+1$ with probability $\frac{1}{2}$.
\item Uniformly choose the vertex $V_{t+1}$ that has the color
$I_{t+1}$ in $\sigma_{t}$.
\item Choose the vertex $\tilde{V}_{t+1}$ based on the following
rules:
\begin{enumerate}
\setlength\itemsep{3pt}
\item If $V_{t+1}\in \mathcal{B}_{m_{0}}$ for some $m_{0}<m$, then
uniformly choose $\tilde{V}_{t+1}$ in $\mathcal{B}_{m_{0}}$ that has the
color $\tilde{I}_{t+1}$ in $\tilde{\sigma}_{t}.$
\item If $V_{t+1}\in \mathcal{B}_{m_{0}}$ for some $m_{0}\geq m$,
$\textbf{S}{}^{m, I_{t+1}}_{t}\neq \tilde{\textbf{S}}{}^{m, \tilde{I}_{t+1}}_{t}$
and $\textbf{S}{}^{m, J_{t+1}}_{t}\neq \tilde{\textbf{S}}{}^{m,
\tilde{J}_{t+1}}_{t}$, then uniformly choose $\tilde{V}_{t+1}$ in
$\mathcal{B}_{[m,3]}$ that has the color $\tilde{I}_{t+1}$ in
$\tilde{\sigma}_{t}$. $\mathcal{B}_{[m,3]}$ is defined as
$\bigcup_{i=m}^{3}\mathcal{B}_{i}$.
\item In other cases, let $\{{v}_{i}\} = v_{1},v_{2}, \dots$ be an
enumeration of the vertices in $\mathcal{B}_{[m, 3]}$ with the color
$I_{t+1}$ in $\sigma_{t}$. It is first ordered based on the index of the
basket it belongs to, and then based on its index in $V$. Let
$\{\tilde{v}_{i}\}=\tilde{v}_{1}, \tilde{v}_{2}, \dots$ be the enumeration
of the vertices in $\mathcal{B}_{[m, 3]}$ with the color $\tilde{I}_{t+1}$
in $\tilde{\sigma}_{t}$ having the same rule. Then, as $V_{t+1} \in
\{v_{i}\}$, there exists $j$ that satisfies $V_{t+1} = v_{j}.$ Let
$\tilde{V}_{t+1}$ as $\tilde{v}_{j}\in\{\tilde{v}_{i}\}$.
\end{enumerate}
\item Change the color of the vertex $V_{t+1}$ to $J_{t+1}$ in
$\sigma_{t}$, and change the color of the vertex $\tilde{V}_{t+1}$ to
$\tilde{J}_{t+1}$ in $\tilde{\sigma}_{t}.$
\end{enumerate}
When $\textbf{S}{}_{t}^{1} = \tilde{\textbf{S}}{}_{t}^{1}$ is reached,
repeat the process with m = 2.
Note that if $\textbf{S}{}_{t}^{1} = \tilde{\textbf{S}}{}_{t}^{1}$ and
$\textbf{S}{}_{t}^{2} = \tilde{\textbf{S}}{}_{t}^{2}$, then
$\textbf{S}{}_{t}^{3} = \tilde{\textbf{S}}{}_{t}^{3}.$
Denote $\mathbb{P}^{BC}_{\sigma_{0},\tilde{\sigma}_{0}}$ as the
underlying probability measure of the coupling, and let
$\mathbb{E}^{BC}_{\sigma_{0},\tilde{\sigma}_{0}}$ be the expectation and 
$\mathbb{V}ar^{BC}_{\sigma_{0},\tilde{\sigma}_{0}}$ be the 
variance with respect to the probability measure. The following proposition proves the coalescence
of the basket chains with high probability.
\begin{prop}\label{3.9}
For the two cyclic dynamics $(\sigma_{t})$ and $(\tilde{\sigma}_{t})$,
suppose that $S_{0}=\tilde{S}_{0}$ and $\textbf{S}_{0},
\tilde{\textbf{S}}_{0}\in \mathbb{S}^{\frac{r}{\sqrt{n}}}$ for some
constant $r > 0$. Let $\mathcal{B}$ be a $\lambda$-partition, where
$\lambda > 0$. Then, for a given $\epsilon > 0$, there exists sufficiently large $\gamma$
that satisfies
\begin{equation*}
\mathbb{P}^{BC}_{\sigma_{0},
\tilde{\sigma}_{0}}\,\big(\,\textbf{S}_{\gamma
n}=\tilde{\textbf{S}}_{\gamma n}\,\big)\,\geq\,1-\epsilon.
\end{equation*}
\end{prop}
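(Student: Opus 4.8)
The plan is to coalesce the baskets one at a time and to bound each coalescence time by $O(n)$ steps using Proposition \ref{3.4}. The first observation is that under basketwise coupling both chains select the same pair $(I_{t+1},J_{t+1})=(\tilde{I}_{t+1},\tilde{J}_{t+1})$ at every step, so the two proportion chains perform identical transitions and $S_{t}=\tilde{S}_{t}$ for all $t$ (using $S_{0}=\tilde{S}_{0}$). Consequently, if $\textbf{S}^{1}_{t}=\tilde{\textbf{S}}^{1}_{t}$ and $\textbf{S}^{2}_{t}=\tilde{\textbf{S}}^{2}_{t}$, then the identity $\sum_{m}(|\mathcal{B}_{m}|/n)\,\textbf{S}^{m,k}_{t}=S^{k}_{t}$ forces $\textbf{S}^{3}_{t}=\tilde{\textbf{S}}^{3}_{t}$ as well. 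Hence it suffices to run the coupling with $m=1$ until $\textbf{S}^{1}$ coalesces, then with $m=2$ until $\textbf{S}^{2}$ coalesces, and to bound each of these two phases by $O(n)$ steps with high probability.

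For the active basket $m$ I would set $Z_{t}=\Vert\textbf{S}^{m}_{t}-\tilde{\textbf{S}}^{m}_{t}\Vert_{1}$ and $T^{(m)}=\min\{t:Z_{t}=0\}$. The first step is to establish, by the same case analysis as in Proposition \ref{3.14}, that $Z_{t}$ is a supermartingale while $m$ is active, i.e. $\mathbb{E}^{BC}_{\sigma_{0},\tilde{\sigma}_{0}}[\,Z_{t+1}-Z_{t}\,\vert\,\mathcal{F}_{t}\,]\leq 0$ on $\{T^{(m)}>t\}$; the rule for selecting $\tilde{V}_{t+1}$ (enumeration matching in case (c) versus the uniform choice in $\mathcal{B}_{[m,3]}$ in case (b)) is precisely arranged so that every possible update either leaves $Z_{t}$ fixed or decreases it in expectation, while the move for $V_{t+1}\in\mathcal{B}_{m_{0}}$ with $m_{0}<m$ keeps the already-matched baskets equal. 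Each step alters at most one entry of $\textbf{S}^{m}$ and one of $\tilde{\textbf{S}}^{m}$, each by $1/|\mathcal{B}_{m}|\leq 1/(\lambda n)$, so the increments satisfy $|Z_{t+1}-Z_{t}|\leq B$ with $B=O(1/n)$.

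The next step is a uniform lower bound on the conditional variance. I would confine both chains to the good region through the exit time $T_{\textnormal{exit}}=\min\{t:S_{t}\notin\mathcal{S}_{n}^{\rho+}\text{ or }\textbf{S}_{t},\tilde{\textbf{S}}_{t}\notin\mathbb{S}^{\rho+}\}$ with $\rho=r'/\sqrt{n}$, and put $N=T^{(m)}\wedge T_{\textnormal{exit}}$. On $\{N>t\}$ all proportions $S^{k}_{t}$ and all basket proportions lie within $O(1/\sqrt{n})$ of $\tfrac13$, hence exceed $\tfrac14$ for large $n$; combined with the $\lambda$-partition this should guarantee that, whenever $Z_{t}>0$, there is probability $\Omega(1)$ of a move changing $Z_{t}$ by $\pm\,\Omega(1/n)$, giving $\mathbb{V}ar^{BC}_{\sigma_{0},\tilde{\sigma}_{0}}(Z_{t+1}\,\vert\,\mathcal{F}_{t})\geq\sigma^{2}$ with $\sigma^{2}=\Omega(1/n^{2})$. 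Since $\textbf{S}_{0},\tilde{\textbf{S}}_{0}\in\mathbb{S}^{r/\sqrt{n}}$ we have $z_{0}=Z_{0}=O(1/\sqrt{n})$, so the three hypotheses of Proposition \ref{3.4} hold on $\{N>t\}$, and for $\gamma$ large enough $\mathbb{P}^{BC}_{\sigma_{0},\tilde{\sigma}_{0}}(N>\gamma n)\leq 4z_{0}/(\sigma\sqrt{\gamma n})=O(\gamma^{-1/2})\leq\epsilon/4$.

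It then remains to show the chains do not leave the good region prematurely, i.e. $\mathbb{P}^{BC}_{\sigma_{0},\tilde{\sigma}_{0}}(T_{\textnormal{exit}}\leq\gamma n)\leq\epsilon/4$. The confinement of $S_{t}$ is furnished directly by the maximal estimate of Proposition \ref{3.3}, and the confinement of $\textbf{S}_{t},\tilde{\textbf{S}}_{t}$ should follow from the contraction of Proposition \ref{3.7} (as used in Proposition \ref{3.8}) upgraded to a maximal inequality by the same supermartingale-exit argument as Proposition \ref{3.3}; for $r'$ large both probabilities are small. Combining, $\mathbb{P}^{BC}_{\sigma_{0},\tilde{\sigma}_{0}}(T^{(m)}>\gamma n)\leq\mathbb{P}^{BC}_{\sigma_{0},\tilde{\sigma}_{0}}(N>\gamma n)+\mathbb{P}^{BC}_{\sigma_{0},\tilde{\sigma}_{0}}(T_{\textnormal{exit}}\leq\gamma n)<\epsilon/2$ for each of $m=1,2$; running the two phases in succession and taking a union bound would yield $\mathbb{P}^{BC}_{\sigma_{0},\tilde{\sigma}_{0}}(\textbf{S}_{\gamma n}=\tilde{\textbf{S}}_{\gamma n})\geq 1-\epsilon$ after enlarging $\gamma$. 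I expect the main obstacle to be the variance lower bound: one must verify that the coupling never synchronizes so completely that $Z_{t}$ freezes before coalescence, which is exactly the purpose of the uniform choice of $\tilde{V}_{t+1}$ in $\mathcal{B}_{[m,3]}$, and confirming it requires the full case enumeration of the coupling together with the $\lambda$-partition and the $\mathbb{S}^{\rho}$ confinement.
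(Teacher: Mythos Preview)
Your proposal is correct and matches the paper's proof in all essential respects: both preserve $S_{t}=\tilde{S}_{t}$ under basketwise coupling, coalesce the baskets sequentially, show that $\Vert\textbf{S}^{m}_{t}-\tilde{\textbf{S}}^{m}_{t}\Vert_{1}$ is a nonnegative supermartingale with increments $O(1/n)$ and conditional variance $\Omega(1/n^{2})$ on a good region, and apply Proposition~\ref{3.4}. The one place the paper proceeds differently is the confinement step. Rather than upgrading Proposition~\ref{3.8} to a maximal inequality on the scale $r'/\sqrt{n}$ as you propose, the paper confines to $\mathbb{S}^{\rho}$ for a \emph{fixed} constant $\rho>0$ and bounds the exit probability by an occupation-time argument: since each coordinate of $\textbf{S}^{m}$ moves by at most $1/(\lambda n)$ per step, exiting $\mathbb{S}^{\rho}$ within $\gamma n$ steps forces at least $n\lambda\rho/2$ visits to $\{|\textbf{S}^{m,j}_{t}-\tfrac{1}{3}|\geq\rho/2\}$, whose expected number over $\gamma n$ steps is $O(1)$ by the pointwise $O(n^{-1})$ bound of Proposition~\ref{3.8}, and Markov's inequality gives exit probability $O(n^{-1})$. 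This sidesteps reproving a supermartingale maximal bound for the basket chain and uses Proposition~\ref{3.8} exactly as stated; your route is also workable but requires that extra ingredient.
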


Now, the overall coupling, which is a combination of the coupling methods
proposed in the previous sections, is introduced. With this coupling, the
coalescence of the two cyclic dynamics is obtained with high probability,
and the proof of the upper bound is completed.
\subsection{Overall Coupling}\label{3.23}
The \textit{overall coupling} of the two cyclic dynamics $(\sigma_{t})$ and
$(\tilde{\sigma}_{t})$ is denoted with the parameters $\gamma_{1}$,
$\gamma_{2}$, $\gamma_{3}$, $\gamma_{4}\,>\,0$. These parameters are taken
from Proposition \ref{2.2}, Proposition \ref{3.5}, Proposition \ref{3.6}, and
Proposition \ref{3.9}, respectively. The first cyclic dynamics $(\sigma_{t})$ starts at $\sigma_{0}$ and the
second cyclic dynamics $(\tilde{\sigma}_{t})$ starts at
$\tilde{\sigma}_{0}$, where $\tilde{\sigma}_{0}$ is determined according to the distribution
$\mu_{n}$. The coupling is evolved through the following procedure: 
\begin{enumerate}
\setlength\itemsep{3pt}
\item Iterate two chains independently until time
$t_{(1)}(n)=\gamma_{1}n$.
\item Configure the baskets $\mathcal{B} = \bigcup_{k =
1}^{3}\mathcal{B}_{k}$ with the colors in $\sigma_{t_{(1)}(n)}$, i.e.
$\mathcal{B}_{k} =\{\,v:\sigma_{t_{(1)}(n)}(v) = k\,\},\,k = 1,\,2,\,3.$
\item Iterate two chains independently until time
$t_{(2)}(n)=t_{(1)}(n)+t(n).$
\item Iterate two chains with semi-coordinatewise coupling until
time $t_{(3)}(n)=t_{(2)}(n)+\gamma_{2}n.$
\item Iterate two chains with semi-synchronized coupling until time
$t_{(4)}(n)=t_{(3)}(n)+\gamma_{3}n.$
\item Iterate two chains with basketwise coupling until time
$t_{(5)}(n)=t_{(4)}(n)+\gamma_{4}n.$
\end{enumerate}
Denote $\mathbb{P}^{OC}_{\sigma_{0}}$ as the underlying probability
measure.
\subsection{Proof of Upper Bound}\label{3.44}
Here, the proof of the upper bound of the mixing time is demonstrated
using the overall coupling. The proof is similar to that in \cite[Section
4.7]{1}; however, because the conditions are slightly different, the
detailed proof is provided here to ensure completeness.
\begin{prop}\label{3.10}
For the cyclic dynamics $(\sigma_{t})$ and the constant $\epsilon > 0$, and for all sufficiently large $n$,
there exists $\gamma > 0$ such that
\begin{equation*}
\Vert\,\mathbb{P}_{\sigma_{0}}(\sigma_{t_{\gamma}(n)}\in\cdot)-
\mu_{n}\,\Vert_{\textnormal{TV}}\leq\epsilon.
\end{equation*}
\end{prop}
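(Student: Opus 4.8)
The plan is to read the total variation distance as the failure probability of a coupling. Let the second copy $(\tilde\sigma_t)$ start from $\tilde\sigma_0\sim\mu_n$. Because $\mu_n$ is stationary (Proposition \ref{2.6}), $\tilde\sigma_t\sim\mu_n$ for every $t$, so the coupling inequality yields
\[
\Vert\,\mathbb{P}_{\sigma_{0}}(\sigma_{t_{\gamma}(n)}\in\cdot)-\mu_{n}\,\Vert_{\textnormal{TV}}\,\le\,\mathbb{P}^{OC}_{\sigma_{0}}\big(\,\sigma_{t_{\gamma}(n)}\neq\tilde{\sigma}_{t_{\gamma}(n)}\,\big).
\]
It therefore suffices to produce a coupling under which the two full configurations coalesce before $t_\gamma(n)$ with probability at least $1-\epsilon$. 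The overall coupling of Section \ref{3.23} is built for exactly this, and with $\gamma=\gamma_1+\gamma_2+\gamma_3+\gamma_4$ its terminal time is $t_{(5)}(n)=\tfrac{2}{3}n\log n+\gamma n=t_\gamma(n)$, so the time budget matches.

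I would then process the stages of the overall coupling in order, at each step verifying that the regularity produced by the previous stage satisfies the hypotheses of the proposition governing the next, and choosing $r$ and the $\gamma_i$ so that the finitely many stage-failure probabilities sum to at most $\epsilon$. After the burn-in of length $\gamma_1 n$, Proposition \ref{2.2} makes $\mathbb{E}_{\sigma_0}\Vert\hat S_{t_{(1)}(n)}\Vert_2^2$ of order $e^{-3\gamma_1/2}$, so for $\gamma_1$ large every color class has size at least $\lambda n$ and $\mathcal{B}$ is a $\lambda$-partition with high probability. After the further independent run of length $t(n)$, Propositions \ref{2.2} and \ref{3.1} give $S,\tilde S\in\mathcal{S}^{r/\sqrt n}$ and Proposition \ref{3.8} (its case $t\ge t(n)$) gives $\mathbf{S},\tilde{\mathbf{S}}\in\mathbb{S}^{r/\sqrt n}$, each up to probability $O(r^{-2})$. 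These are precisely the inputs of Proposition \ref{3.5}, whose semi-coordinatewise coupling drives $\Vert S-\tilde S\Vert_1<10/n$ while keeping both chains within $r/\sqrt n$ of $\bar e$; Proposition \ref{3.6} then makes the proportion chains coincide, $S_{t_{(4)}(n)}=\tilde S_{t_{(4)}(n)}$; and, with the hypotheses $S=\tilde S$ and $\mathbf{S},\tilde{\mathbf{S}}\in\mathbb{S}^{r/\sqrt n}$ now in force, Proposition \ref{3.9} makes the basket chains coincide, $\mathbf{S}_{t_{(5)}(n)}=\tilde{\mathbf{S}}_{t_{(5)}(n)}$. A union bound over these stage-failure events bounds the right-hand side above by $\epsilon$.

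The crux is the final implication, from coalescence of the basket chains to coalescence of the full configurations $\sigma_{t_{(5)}(n)}=\tilde\sigma_{t_{(5)}(n)}$. Equality of the $3\times3$ matrices $\mathbf{S}$ and $\tilde{\mathbf{S}}$ only asserts that, within each basket, the two copies carry equally many vertices of each color, which is much weaker than vertex-by-vertex agreement. What should rescue the implication is the sorted-enumeration matching in step 4 of the basketwise coupling: I would verify that this rule keeps the already-processed baskets $\mathcal{B}_{m_0}$, $m_0<m$, aligned vertex-by-vertex while basket $m$ is handled, and recolors matched vertices in lockstep, so that the moment a basket's color counts first agree its two colorings coincide as sets. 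Making this invariant precise—in particular excluding count-matched but set-misaligned states along the trajectory—is where the genuine work lies; the remaining estimates are bookkeeping with the cited propositions. A secondary point is the hand-off out of the semi-coordinatewise phase, where $\Vert S-\tilde S\Vert_1<10/n$ must be guaranteed to hold at the switching time $t_{(3)}(n)$ itself so that Proposition \ref{3.6} applies.
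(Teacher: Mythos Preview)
Your stage-by-stage bookkeeping matches the paper's, but the step you correctly flag as ``the crux'' is a genuine gap, and the rescue you sketch does not go through. The basketwise coupling does \emph{not} produce $\sigma_{t_{(5)}(n)}=\tilde\sigma_{t_{(5)}(n)}$ with high probability. In step 4(a) of that coupling, when the selected vertex lies in an already-processed basket $\mathcal{B}_{m_0}$ with $m_0<m$, the partner vertex $\tilde V_{t+1}$ is chosen \emph{uniformly at random} among vertices of the right color in $\mathcal{B}_{m_0}$, not by any sorted-enumeration rule; so no vertex-by-vertex alignment invariant is being preserved there. More fundamentally, at the moment basketwise coupling begins (time $t_{(4)}(n)$) the two configurations satisfy only $S_{t_{(4)}(n)}=\tilde S_{t_{(4)}(n)}$, with no vertexwise relationship whatsoever, so there is nothing aligned to preserve in the first place. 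The coupling is engineered to make the $3\times3$ matrices $\mathbf{S}$ and $\tilde{\mathbf{S}}$ agree, and that is all it delivers.

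The paper closes the gap by an exchangeability argument rather than full coalescence. The baskets $\mathcal{B}$ are the color classes of $\sigma$ at time $t_{(1)}(n)$; conditioned on $\mathcal{F}_{t_{(1)}(n)}$, the law of $\sigma_t$ for $t\ge t_{(1)}(n)$ is therefore invariant under permutations of vertices within each basket (the dynamics are label-blind). Since $\mu_n$ is uniform and hence also permutation-invariant, the conditional total variation distance between $\sigma_{t_{(5)}(n)}$ and $\mu_n$ equals that between the pushforwards to the basket chain,
\[
\Vert\,\mathbb{P}^{OC}_{\sigma_{0}}\big(\sigma_{t_{(5)}(n)}\in\cdot\,\big|\,\mathcal{F}_{t_{(1)}(n)}\big)-\mu_{n}\,\Vert_{\textnormal{TV}}
=\Vert\,\mathbb{P}^{OC}_{\sigma_{0}}\big(\mathbf{S}_{t_{(5)}(n)}\in\cdot\,\big|\,\mathcal{F}_{t_{(1)}(n)}\big)-\mu_{n}\circ\mathbf{S}^{-1}\,\Vert_{\textnormal{TV}},
\]
and the right side is bounded by $\mathbb{P}^{OC}_{\sigma_0}\big(\mathbf{S}_{t_{(5)}(n)}\neq\tilde{\mathbf{S}}_{t_{(5)}(n)}\,\big|\,\mathcal{F}_{t_{(1)}(n)}\big)$. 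This is the real reason for the burn-in and basket construction in steps (1)--(2) of the overall coupling: they manufacture the within-basket symmetry that makes $\mathbf{S}$ a sufficient statistic, not merely guarantee that $\mathcal{B}$ is a $\lambda$-partition. Your coupling inequality $\Vert\cdot\Vert_{\textnormal{TV}}\le\mathbb{P}(\sigma\neq\tilde\sigma)$ should be replaced by this reduction to $\mathbf{S}$.
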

\begin{proof}
First, the overall coupling is applied via seven steps:
\begin{enumerate}
\setlength\itemsep{3pt}
\item Choose $\rho > 0$. By Proposition \ref{2.2}, a sufficiently
large $\gamma_{1}$ satisfying $S_{t_{(1)}(n)}\in \mathcal{S}^{\rho}$ with probability $1-\epsilon/6$ for
all large $n$ can be chosen.
\item Then, $\mathcal{B}$ (defined in step 2 of the overall coupling
procedure) can be considered as a $(\frac{1}{3}-\rho)$ partition.
\item By Proposition \ref{3.1}, for some $r > 0$, $S_{t_{(2)}(n)}\in
\mathcal{S}^{\frac{r}{\sqrt{n}}}$ with a minimum probability of $1-
\epsilon/6$.
\item By the proof of Proposition \ref{2.8}, if $r$ is sufficiently
large, $\tilde{S}_{t_{(2)}(n)}\in \mathcal{S}^{\frac{r}{\sqrt{n}}}$ with a
minimum probability of $1-\epsilon/6$.
\item By Propositions \ref{3.5} and \ref{3.6}, $S_{t_{(4)}(n)}=
\tilde{S}_{t_{(4)}(n)}$ with a minimum probability of $1-3\epsilon/6$.
\item By Proposition \ref{3.8}, for sufficiently large $r_{1}>0$,
$\textbf{S}_{t_{(4)}(n)}, \tilde{\textbf{S}}_{t_{(4)}(n)}\in
\mathbb{S}^{\frac{r_{1}}{\sqrt{n}}}$ with a minimum probability of $1-
\epsilon/6$.
\item By Proposition \ref{3.9}, $\textbf{S}_{t_{(5)}(n)}=
\tilde{\textbf{S}}_{t_{(5)}(n)}$ with a minimum probability of $1-
5\epsilon/6$.
\end{enumerate}
When $t\geq t_{(1)}(n)$ and $\mathcal{F}_{t_{(1)}(n)}$ are given, by the
manner in which the baskets $\mathcal{B}$ were defined, the distribution
of $\sigma_{t}$ is the same under the permutations of the vertices on each
basket $\mathcal{B}_{m}$. As the probability measure $\mu_{n}$ is uniformly distributed in
$\Sigma_{n}$, the same notion can be applied to $\tilde{\sigma}_{t}$.
Thus,
\begin{align*}
&\Vert\,\mathbb{P}^{OC}_{\sigma_{0}}\,\big(\,\sigma_{t_{(5)}(n)}\in
\cdot\,\vert\,\mathcal{F}_{t_{(1)}(n)},\,S_{t_{(1)}(n)}\in \mathcal{S}^{\rho}\,\big)\,-\,
\mu_{n}\,\Vert_{\textnormal{TV}}\\&\quad=\Vert\,\mathbb{P}^{OC}_{\sigma_{0}}\,\big(\,\textbf{S}_{t_{(5)}(n)}\in \cdot\,\vert\,\mathcal{F}_{t_{(1)}(n)},\,S_{t_{(1)}(n)}\in \mathcal{S}^{\rho}\,\big)\,-\,\mu_{n}\circ
\textbf{S}^{-1}\,\Vert_{\textnormal{TV}}\,\\&\quad\leq\,
\mathbb{P}^{OC}_{\sigma_{0}}\big(\,\textbf{S}_{t_{(5)}(n)}\neq\tilde{\textbf{S}}
_{t_{(5)}(n)}\,\vert\,\mathcal{F}_{t_{(1)}(n)},\,S_{t_{(1)}(n)}\in \mathcal{S}^{\rho}\,\big)\,\leq\,5\epsilon/6.
\end{align*}
Therefore, 
\begin{align*}
&\Vert\,\mathbb{P}_{\sigma_{0}}\,(\,\sigma_{t_{(5)}(n)}\in \cdot\,)-
\mu_{n}\,\Vert_{\textnormal{TV}}\\&\quad\leq\,\mathbb{E}^{OC}_{\sigma_{0}}\,\big[\,\Vert\,\mathbb{P}^{OC}_{\sigma_{0}}\,\big(\,\sigma_{t_{(5)}(n)}\in
\cdot\,\vert\,\mathcal{F}_{t_{(1)}(n)}\,\big)\,-\,
\mu_{n}\,\Vert_{\textnormal{TV}}\,\vert\,S_{t_{(1)}(n)}\in \mathcal{S}^{\rho}\,\big]\,+\,\mathbb{P}^{OC}_{\sigma_{0}}\,\big(\,S_{t_{(1)}(n)}\notin \mathcal{S}^{\rho}\,\big)
\,\leq\,\epsilon.
\end{align*}
Finally, let $\gamma$ be $\gamma_{1}+\gamma_{2}+\gamma_{3}+\gamma_{4}$.
This completes the proof.
\end{proof}
\subsection{Proof of Theorem 1.1}
In Theorem \ref{1.1}, the lower bound of the mixing time is guaranteed by
the Proposition \ref{2.8}, and the upper bound of the mixing time is
guaranteed by the Proposition \ref{3.10}. Therefore, the cutoff phenomenon
of cyclic dynamics is proved.\vskip 7pt
\paragraph{\textit{Acknowledgement.}}
The author was supported by the National Research Foundation of Korea (NRF) grant funded by the Korea government (MSIT) (No. 2018R1C1B6006896). This work was supervised by I. Seo.  
\printbibliography

@article{1,
   author = {P. Cuff and J.Ding and O.Louidor and E.Lubetzky and Y. Peres and A.Sly},
   title = {Glauber
    dynamics for the mean-field potts model},
   journal = {Journal of Statistical Physics},
   volume = {149},
   number = {3},
   pages = {432-477},
   year = {2012}
}

@book{2,
    title = {Markov Chains and Mixing Times},
    author = {D. A. Levin and Y. Peres and E. L. Wilmer},
    year = {2009},
    publisher = {American Mathematical Society}
}

@article{3,
   author = {D. Aldous and P. Diaconis},
   title = {Shuffling cards and stopping times},
   journal = {The American Mathematical Monthly},
   volume = {93},
   number = {5}, 
   pages = {333-348},
   year = {1986}
}

@article{4,
   author = {D. A. Levin and M. Luczak and Y. Peres},
   title = {Glauber
    dynamics for the mean-field Ising model: cut-off, critical power law, and metastability},
   journal = {Probability Theory and Related Fields},
   volume = {146},
   number = {1},
   pages = {223-265},
   year = {2010}
}

@article{5,
   author = {E. Lubetzky and A. Sly},
   title = {Cutoff for the Ising model on the lattice},
   journal = {Inventiones Mathematicae},
   volume = {191},
   pages = {719-755},
   number = {3},
   year = {2013}
}

@article{6,
   author = {E. Lubetzky and A. Sly},
   title = {Cutoff for general spin systems with arbitrary boundary conditions},
   journal = {Communications on Pure and Applied Mathematics},
   volume = {67},
   pages = {982-1027},
   number = {6},
   year = {2014}
}

@article{7,
   author = {E. Lubetzky and A. Sly},
   title = {Information percolation and cutoff for the stochastic Ising model},
   journal = {Journal of the American Mathematical Society},
   volume = {29},
   number = {3},
   pages = {729-774},
   year = {2016}
}

@article{8,
   author = {D. Nam and A. Sly},
   title = {Cutoff for the Swendsen-Wang dynamics on the lattice},
   journal = {Annals of Probability},
   volume = {47},
   number = {6},
   pages = {3705-3761},
   year = {2019}
}

@article{9,
   author = {S. Ganguly and I. Seo},
   title = {Information percolation and cutoff for the random-cluster model},
   journal = {Random Structures and Algorithms},
   volume = {57},
   number = {3},
   pages = {770-822},
   year = {2020}
}

@article{10,
   author = {A. Ben-Hamou and J. Salez},
   title = {Cutoff for non-backtracking random walks on sparse random graphs},
   journal = {Annals of Probability},
   volume = {45},
   number = {3},
   pages = {1752-1770},
   year = {2017}
}

@article{11,
   author = {C. Landim and I. Seo},
   title = {Metastability of non-reversible, mean-field Potts model with three spins},
   journal = {Journal of Statistical Physics},
   volume = {165},
   number = {4},
   pages = {693-726},
   year = {2016}
}
\end{document}